\newcommand{\revise}[1]{{#1}}
\crefname{hypothesis}{Hypothesis}{Hypotheses}
\title{Local Fourier analysis of Balancing Domain Decomposition by Constraints algorithms\thanks{Received by the editors editors May 31, 2018; accepted for publication (in revised form) June 18, 2019;
published electronically October 29, 2019.\\
\url{https://doi.org/10.1137/18M1191373}
\funding{The work of J.B. was partially funded by U.S. Department of Energy, Office of
Science, Office of Advanced Scientific Computing Research under Award
Number DE-SC0016140. The work of S.M. was partially funded by an NSERC Discovery Grant.}}}
\author{Jed Brown\thanks{Department of Computer Science,
University of Colorado Boulder,
430 UCB, Boulder, CO 80309
  (\email{jed@jedbrown.org }).}
\and Yunhui He\thanks{Department of Mathematics and Statistics, Memorial University of Newfoundland, St. John's, NL A1C 5S7, Canada
  (\email{yunhui.he@mun.ca}, \email{smaclachlan@mun.ca}).}
  \and Scott MacLachlan\footnotemark[3]}
\begin{document}
\maketitle

\begin{abstract}
Local Fourier analysis is a commonly used tool for the analysis of
multigrid and other multilevel algorithms, providing both insight into
observed convergence rates and predictive analysis of the performance
of many algorithms.  In this paper, for the first time, we adapt
local Fourier analysis to examine variants of two- and three-level
balancing domain decomposition by constraints (BDDC) algorithms, to better understand the eigenvalue distributions and condition number bounds on these preconditioned operators. This adaptation is based on a modified choice of basis for the space of Fourier harmonics that greatly simplifies the application of local Fourier analysis in this setting. The local Fourier analysis is validated by considering the two dimensional Laplacian and predicting the condition numbers of the preconditioned operators with different sizes of subdomains. Several variants are analyzed, showing the two- and three-level performance of the ``lumped'' variant can be greatly improved when used in multiplicative combination with a weighted diagonal scaling preconditioner, with weight optimized through the use of LFA.
\end{abstract}

\begin{keywords}
Balancing domain decomposition by constraints (BDDC), Domain decomposition, Local Fourier analysis, Multiplicative methods
\end{keywords}

\begin{AMS}
65N22, 65N55, 65F08
\end{AMS}

\section{Introduction}
Domain decomposition methods are well-studied approaches for the numerical solution of  partial differential equations
both experimentally and theoretically \cite{MR1090464,MR3450068,MR1064335,MR2104179}, due to their efficiency and robustness for many  large-scale problems, and the need for parallel algorithms. Among the many families of domain decomposition algorithms are  Neumann-Neumann \cite{MR2104179}, FETI-DP \cite{dryja2002feti}, Schwarz \cite{MR1064335,MR2104179}, and Optimized Schwarz \cite{MR3450068,MR1924414}.  Balancing domain decomposition by constraints (BDDC) is one family of non-overlapping domain decomposition methods. While BDDC was first introduced by Dohrmann in \cite{dohrmann2003preconditioner}, several variants  have recently been proposed. BDDC-like methods have been successfully applied to many PDEs, including elliptic problems \cite{li2006feti}, the incompressible Stokes equations \cite{li2006bddc,MR2334130}, H(curl) problems \cite{dohrmann2016bddc}, flow in porous media \cite{tu2006bddc}, and the incompressible elasticity problem \cite{MR2334091,pavarino2010bddc}. Theoretical analysis of BDDC has primarily been based on finite-element approximation theory \cite{brenner2007bddc,MR2372024,mandel2003convergence,mandel2005algebraic}. It has been shown that the condition number of the preconditioned BDDC operator can be bounded by  a function of $\frac{H}{h}$(where $h$ is the meshsize, and $H$ is the subdomain size), independent of  the number of subdomains  \cite{mandel2003convergence,mandel2005algebraic}.  A nonoverlapping domain decomposition method for discontinuous Galerkin based on the BDDC algorithm is presented in \cite{MR3666858}, and the condition number of the preconditioned system is shown to be bounded by similar estimates
as those for conforming finite element methods.   BDDC methods in three- or multilevel forms  have also been developed \cite{MR2457352,MR2341811,MR2282536}, and good implementations are available, for example, \cite{badia2018fempar,sousedik2013adaptive,zampini2016pcbddc}.

Since BDDC algorithms are  widely used to solve many problems with high efficiency and parallelism, better understanding of how this methodology works is useful in the design of new algorithms. Local Fourier analysis (LFA), first introduced by Brandt \cite{MR0431719} and well-studied for multigrid methods \cite{briggs2000multigrid,stuben1982multigrid,MR1807961,MR1156079,wienands2004practical}, is an analysis framework that provides predictive performance estimates for many multilevel iterations and preconditioners. Early application of LFA was mainly focused on scalar problems or systems of PDEs with collocated discretizations. Standard LFA \cite{MR1807961} cannot be directly applied to higher-order and staggered-grid discretizations, since the analysis  depends on Toeplitz operator structure inherited from the mesh and  discretization. There is a long history of generalization of ``standard'' LFA to account for more general structure of the discrete operator and/or multigrid algorithm, beginning with analysis of red-black (and, later, multicolour) relaxation \cite{kuo1989two,rodrigo2012multicolor,MR1807961}. This work has been generalized in recent years, leading to the idea of ``periodic stencils'' \cite{bolten2018fourier} and similar approaches for PDEs with random coefficients \cite{kumar2018cell}. From a different perspective, similar tools have arisen to account for the structure of coupled systems of PDEs, beginning with \cite{boonen2008local}. This work was expanded by MacLachlan and Oosterlee  \cite{maclachlan2011local}, to account for the structure of overlapping relaxation schemes for the Stokes Equations. In a third setting, the mode analysis of certain space-time multigrid methods, Friedhoff and MacLachlan again use a similar approach to handle coarsening in time  \cite{friedhoff2015generalized,friedhoff2013local}. In this work, we show how  a generalization of these approaches can be applied to the structure of domain decomposition algorithms.

To our knowledge, there has been no research applying local Fourier analysis to  BDDC-like algorithms. The same is true of the closely related finite element tearing and interconnecting dual-primal (FETI-DP) methodology \cite{dryja2002feti,farhat2001feti,li2005dual}.  Here, we adapt LFA to this domain decomposition method by borrowing tools from LFA for systems of PDEs and other block-structured settings.  Noting that the stencil for the domain decomposition method depends on the size of the subdomains, an adaptation of the standard basis is useful, which we present in Section \ref{intr-new-basis}. Because LFA can reflect both the distribution of eigenvalues and associated eigenvectors of a preconditioned operator, here, we adopt LFA to analyze variants of  the common ``lumped'' and ``Dirichlet'' BDDC algorithms, based on \cite{li2007use}, to guide construction of these methods. To do this, we use a modified basis as in \cite{bolten2018fourier,kumar2018cell,maclachlan2011local} for the Fourier analysis that is well-suited for application to domain decomposition preconditioners.

 Applying the two-level BDDC algorithm requires the solution of a Schur complement equation (coarse problem), which usually poses some difficulty with  increasing problem size. Two- and three-level variants are, thus, considered  in this paper. However, as is well-known in the literature, bounds on the performance of BDDC degrade sharply from two-level to three-level methods, particularly for large values of $H/h$. Since our analysis shows that  the largest eigenvalues of the preconditioned operator for the lumped BDDC algorithm are associated with oscillatory  modes, we propose variants of standard BDDC based on  multiplicative preconditioning and multigrid ideas. From the condition numbers offered by LFA, we can easily compare the efficiency of these variants. Furthermore, LFA can provide optimal parameters for these multiplicative methods, helping tune and understand sensitivity to the parameter choice.

This paper is organized as follows. In Section \ref{sec:Dis-Scheme}, we introduce the finite element discretization of the Laplace problem in two dimensions and the lumped and Dirichlet preconditioners. Two- and three-level preconditioned operators are developed in Section \ref{sec:23-level-variants}.  In Section \ref{sec:LFA-analysis}, we discuss  the Fourier representation of the preconditioned operators.  Section \ref{sec:Numer} reports LFA-predicted condition numbers of the BDDC variants considered here. Conclusions are presented in Section \ref{sec:concl}.

\section{Discretization}\label{sec:Dis-Scheme}
We consider the two-dimensional Laplace problem in weak form:  Find $u\in H^1_0(\Omega):=V$ such that
\begin{equation}\label{Lap-problem}
a(u,v)=\int_{\Omega}\nabla u \cdot \nabla vd\Omega=\langle f,v\rangle, \forall v \in V,
\end{equation}
where $\Omega\subset\mathbb{R}^2$ is a bounded domain with
Lipschitz boundary $\partial\Omega$. Here, we consider the Ritz-Galerkin approximation over $V_h$, the space of piecewise bilinear functions on a uniform rectangular mesh of $\Omega=[0,1]^2$. The corresponding linear system of equations is given as
\begin{equation}\label{Global-prob}
  A x  =b.
\end{equation}
We partition the domain, $\Omega$, into $N$ nonoverlapping subdomains, $\Omega_i, i=1,2,\cdots,N$, where each subdomain is a union of shape regular
elements and the nodes on the boundaries of neighboring subdomains match across the interface $\Gamma=\bigcup \partial \Omega_i\backslash \partial \Omega$. The interface of subdomain $\Omega_i$ is defined by $\Gamma_i=\partial\Omega_i\bigcap\Gamma$. Here, we  consider $\Omega$ with both a discretization mesh (with meshsize $h$) and subdomain mesh (with meshsize $H=ph$) given by uniform grids with square elements or subdomains.

The finite-element space $V_h$ can be rewritten as  $V_{h} = V_{{\rm I},h} \bigoplus V_{\Gamma,h},$
where $V_{{\rm I},h}$ is the product of the subdomain interior variable
spaces $V_{{\rm I},h}^{(i)}$. Functions in $V_{{\rm I},h}^{(i)}$ are supported in the subdomain
$\Omega_i$ and vanish on the subdomain interface $\Gamma_{i}$.  $V_{\Gamma,h}$ is
the space of traces on $\Gamma$ of functions in $V_{h}$.  Then, we can write the subdomain problem with Neumann boundary conditions  on $\Gamma_i$ as
\begin{equation}\label{subproblem}
   A^{(i)}x^{(i)}=  \begin{pmatrix}
      A_{{\rm II}}^{(i)} & A_{\Gamma {\rm I}}^{(i)^{T}}\\
     A_{\Gamma {\rm I}}^{(i)} & A_{\Gamma\Gamma}^{(i)}\\
    \end{pmatrix}
    \begin{pmatrix} x_{{\rm I}}^{(i)} \\ x_{\Gamma}^{(i)}\end{pmatrix}
  =\begin{pmatrix} b_{{\rm I}}^{(i)} \\ b_{\Gamma}^{(i)}\end{pmatrix},
\end{equation}
where $x^{(i)}=( x_{{\rm I}}^{(i)}, x_{\Gamma}^{(i)})\in V^{(i)}_{h}=(V^{(i)}_{{\rm I},h},V^{(i)}_{\Gamma,h})$, and $T$ is the (conjugate) transpose.
Then, the global problem (\ref{Global-prob}) can be assembled from the subdomain problems (\ref{subproblem}) as
\begin{equation*}
  A = \sum_{i=1}^{N} R^{(i)^{T}} A^{(i)} R^{(i)},\,\, \text{and}\,\, b = \sum_{i=1}^{N}R^{(i)^{T}}b^{(i)},
\end{equation*}
where $R^{(i)}$ is the restriction operator from a global vector to a subdomain vector on $\Omega_i$.


\subsection{A Partially Subassembled Problem}
In order to describe variants of the BDDC methods, we first introduce a partially subassembled problem, following \cite{li2007use}, and the corresponding space of partially subassembled variables,
\begin{equation}\label{partial-space}
  \hat{V}_h = V_{\Pi,h} \bigoplus V_{r,h} ,
\end{equation}
where $V_{\Pi,h}$ is spanned by the subdomain vertex nodal basis functions (the coarse degrees of freedom). The complementary space, $V_{r,h}$, is the product of
the subdomain spaces $V_{r,h}^{(i)}$, which correspond to the subdomain
interior and  interface degrees of freedom and  are spanned by the basis functions which vanish
at the coarse-grid degrees of freedom. For a $4\times 4$ mesh, the degrees of freedom in $V_{\Pi,h}$ are those corresponding to the circled nodes at the left of Figure \ref{p4-A}, while the degrees of freedom in $V_{r,h}$ correspond to all interior nodes, plus duplicated (broken) degrees of freedom along subdomain boundaries.

 \begin{figure}
\centering
\includegraphics[width=6.25cm,height=5.5cm]{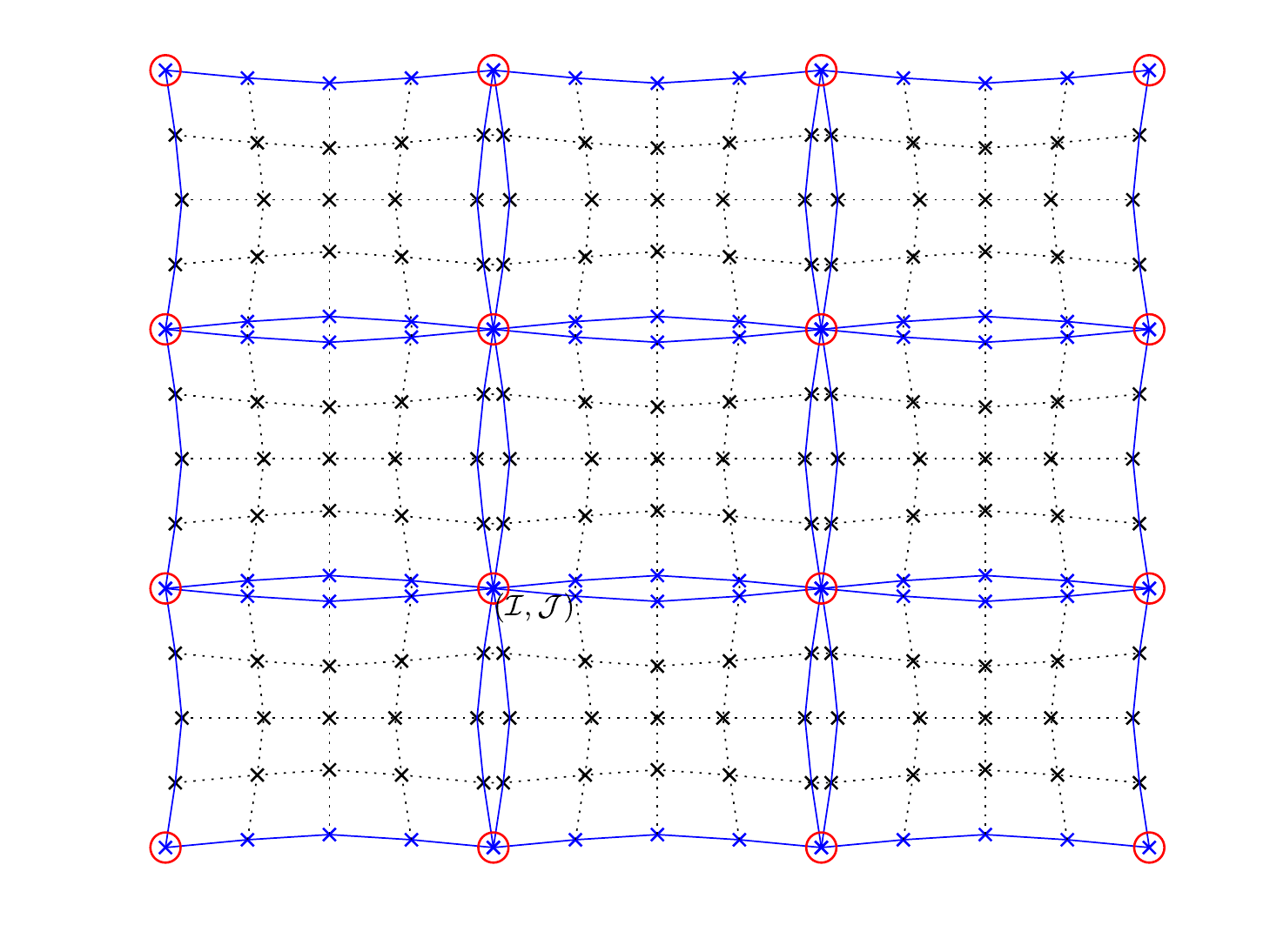}
\includegraphics[width=6.25cm,height=5.5cm]{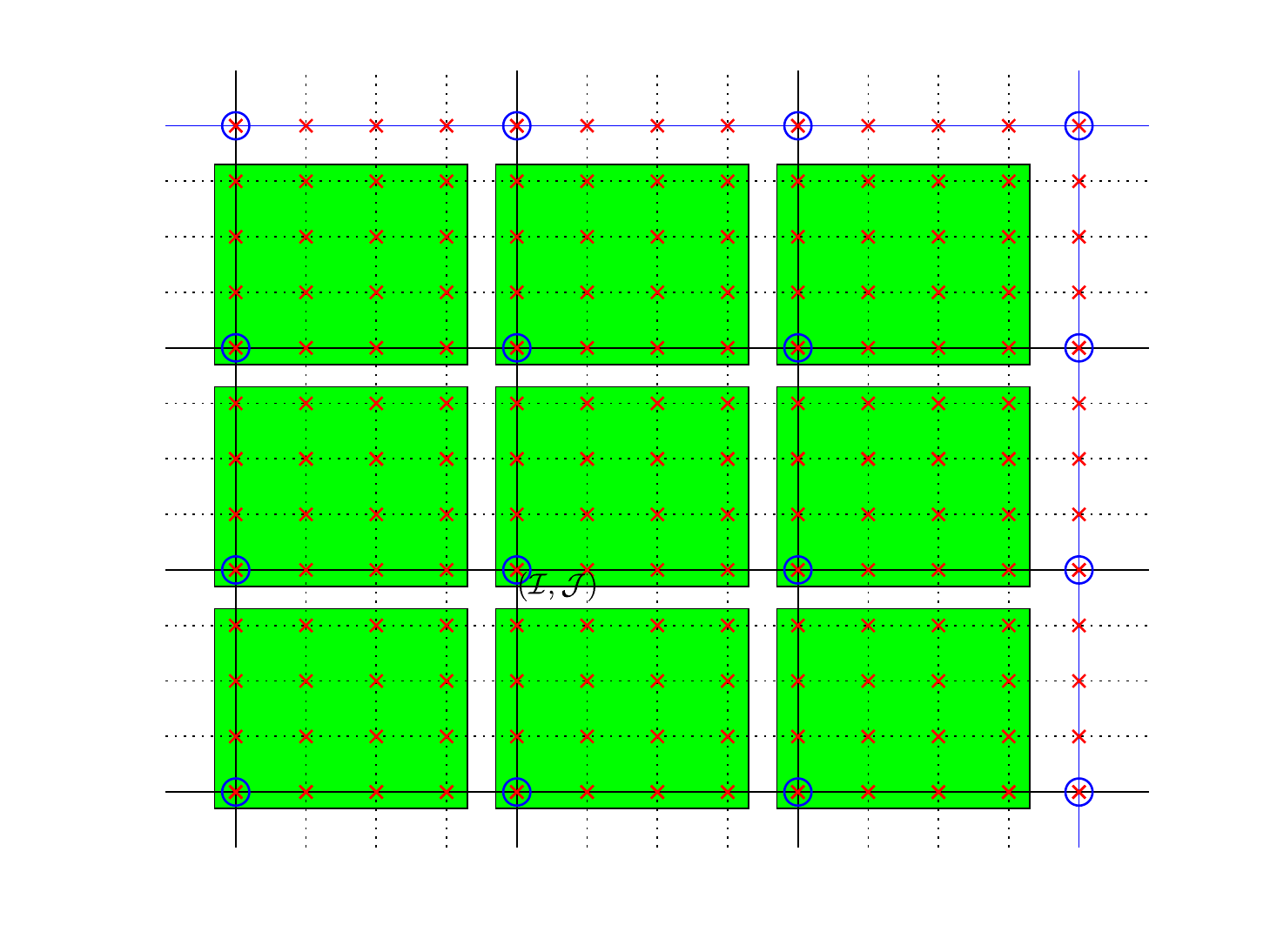}
\caption{At left, the partially broken decomposition given in Equation (\ref{partial-space}), with circled degrees of freedom corresponding to $V_{\Pi,h}$ and all others corresponding to $V_{r,h}$.  This matches the periodic array of subdomains induced by the subsets $\mathfrak{S}_{\mathcal{I},\mathcal{J}}^{*}$ introduced in Equation (\ref{def-SIJ-star}) for $p=4$. At right, a non-overlapping decomposition into subdomains of size $p\times p$ for $p=4$, corresponding to the subsets $\mathfrak{S}_{\mathcal{I},\mathcal{J}}$ introduced in Equation (\ref{def-SIJ}), where LFA works on an infinite
grid and characterizes operators by their action in terms of the
non-overlapping partition denoted by the shading. } \label{p4-A}
\end{figure}

The partially subassembled problem matrix, corresponding
to the variables in the space $\hat{{V}}_{h}$, is obtained by
assembling the subdomain matrices (\ref{subproblem}) only with respect
to the coarse-level variables; that is,
\begin{equation}\label{Partial-Subprob}
  \hat{A} = \sum_{i=1}^{N} (\bar{R}^{(i)})^{T}A^{(i)}\bar{R}^{(i)},
\end{equation}
where $\bar{R}^{(i)}$ is a restriction from space $\hat{V}_{h}$ to $V^{(i)}_{h}$. The central idea behind BDDC preconditioners is to use $\hat{A}$ as an approximation to  $A$ that is more readily inverted, since the subdomain problems can be ``condensed'' to a solve on the underlying coarse grid, as described below.

\begin{remark}
\revise{ The above ``broken'' space can, in fact, be viewed by considering the discretization of the PDE independently on each subdomain and only ``assembling'' the DOFs at the corners of the subdomains. Then, looking at this ``subassembled''  problem, the interface between any two subdomains has independent DOFs associated with each  subdomain at the same locations, which are normally assembled to formulate the global system. This is different than the standard overlapping domain decomposition, which uses only  one DOF at these locations in the overlap between subdomains and contributes this value to each of the shared subdomains.  While $A$ can be readily defined in terms of $\hat A$, the reverse is not true.
}
\end{remark}

\subsection{ Lumped and Dirichlet Preconditioners}
In order to define the  preconditioners under consideration for (\ref{Global-prob}),  we
introduce a positive scaling factor, $\delta_i(\boldsymbol{ x})$,  for each node $\boldsymbol{\boldsymbol {x}}$
on the interface $\Gamma_i$ of subdomain $\Omega_i.$  Let $\mathcal{N}_{\boldsymbol{ x}}$ be the set of indices
of the subdomains that have $\boldsymbol{ x}$ on their boundaries. Define $\delta_i(\boldsymbol{ x})=1/|\mathcal{N}_{\boldsymbol{ x}}|$, where $|\mathcal{N}_{\boldsymbol{ x}}|$ is the cardinality of $\mathcal{N}_{\boldsymbol{ x}}$. The scaled injection operator, $\mathcal{R}_1$, is defined so that each column of $\mathcal{R}_1$ corresponds to a degree of freedom
of the global problem (\ref{Global-prob}). For subdomain interior and coarse-level variables, the corresponding column of $\mathcal{R}_1$ has a single entry with value 1. Columns that correspond to an interface degree of freedom $\boldsymbol{ x}\in \Gamma_{i}$ (the set of nodes in $\Gamma_i$) have $|\mathcal{N}_{\boldsymbol{ x}}|$ non-zero entries each of $\delta_i(\boldsymbol {x})$.

Based on the partially subassembled problem, the first preconditioner introduced  for solving (\ref{Global-prob}) is
\begin{equation*}\label{BDDC-pre}
  M_1^{-1}= \mathcal{R}_{1}^{T}\hat{A}^{-1}\mathcal{R}_1.
\end{equation*}
The preconditioned operator $M_1^{-1}A$ has the same eigenvalues as the preconditioned FETI-DP operator with a lumped preconditioner, except for some eigenvalues equal to 0 and 1 \cite{farhat2001feti,li2007use}. We refer to $M_1$ as the {\it lumped} preconditioner. Note that $M_1$ corresponds to a naive approximation of $A$ by $\hat{A}$, using equal weighting of residual and corrections to/from the duplicated degrees of freedom in the partially broken space.

A similar  preconditioner for $A$ augments this by using discrete harmonic extensions in the restriction and interpolation operators \cite{li2007use}, giving
\begin{equation}\label{M2-Pre}
  M_2^{-1} = (\mathcal{R}_1^{T}-\mathcal{H}J_D) \hat{A}^{-1}\underbrace{(\mathcal{R}_1-J_D^{T}\mathcal{H}^{T})}_{:=\mathcal{R}_2 },
\end{equation}
where $\mathcal{H}$ is the direct sum of local operators $\mathcal{H}^{(i)}=-(A_{{\rm II}}^{(i)})^{-1} (A_{\Gamma {\rm I}}^{(i)})^{T}$, which map the jump over a subdomain interface (given by $J_D$) to the interior of the subdomain by solving a local Dirichlet problem, and gives zero for other values. For any given $v\in \hat{V}_h$, the component of $J_D^{T}v$ on subdomain $\Omega_i$ is given by
\begin{equation}\label{JD-define}
\big(J_D^{T}v(\boldsymbol{x})\big)^{(i)} = \sum_{j\in \mathcal{N}_{\boldsymbol{x}}}\left(\delta_j(\boldsymbol{x})v^{(i)}(\boldsymbol{x})-\delta_i(\boldsymbol{x}) v^{(j)}(\boldsymbol{x})\right), \,\forall \boldsymbol{x}\in\Gamma_{i}.
\end{equation}
 Extending the interface values using the discrete harmonic extension minimizes the energy norm  of the resulting vector \cite{MR2104179}, giving a better stability bound. Furthermore, the preconditioned operator $M_2^{-1}A$ has the same eigenvalues as the BDDC operator \cite{li2006feti}, except for some eigenvalues equal to 1 \cite{li2007use}. We refer to $M_2$ as the {\it Dirichlet} preconditioner. Note that $M_2$ differs from $M_1$ only by local operators in the operators used to map between the fully assembled and partially broken spaces.

Standard bounds (see, e.g., \cite{li2007use}) on  the condition numbers of the preconditioned operators are that, for $M_1^{-1}A$, there exists $\mathfrak{C}_{1,0}\geq 0$ such that  $\kappa \leq \mathfrak{C}_{1,0}\frac{H}{h}(1+{\rm log}\frac{H}{h})$ and, for $M_2^{-1}A$, there exists $\mathfrak{C}_{2,0}\geq 0$ such that  $\kappa \leq \mathfrak{C}_{2,0}(1+{\rm log}\frac{H}{h})^2$.

\begin{remark}
\revise{While the preconditioners given by $M_i^{-1}$ have some notational similarity to Galerkin corrections, it is important to note that $\hat{A}$ acts on a higher-dimensional space than $A$, and does not satisfy $\hat{A}=\mathcal{R}_iA\mathcal{R}_i^T$ for either ``restriction'' operator. The essential difference between the two preconditioners is in how $\mathcal{R}_i$ ``glues'' the solution over the broken space back into the usual continuous space. $\mathcal{R}_1$ does this through a simple partition of unity, while $\mathcal{R}_2$ \revise{minimizes subdomain energy when} propagating mismatches along the subdomain boundaries.}
\end{remark}

\section{Two- and Three-level Variants}\label{sec:23-level-variants}
In both of the above preconditioned operators, we need to solve the partially subassembled problems,  that we write in block form
\begin{equation}\label{partial-problem}
   \underbrace{\begin{pmatrix}
      A_{rr} & \hat{A}_{\Pi r}^{T}\\
     \hat{A}_{ \Pi r} & A_{\Pi\Pi}
    \end{pmatrix}}_{\hat A}
    \underbrace{\begin{pmatrix} \hat{x}_{r} \\ \hat{x}_{\Pi}\end{pmatrix}}_{\hat x}
    = \begin{pmatrix}
      A_{rr} & 0\\
     \hat{A}_{ \Pi r} & \hat{S}_{\Pi}
    \end{pmatrix}
     \begin{pmatrix}
      I & A_{rr}^{-1}\hat{A}_{\Pi r}^{T}\\
    0 & I
    \end{pmatrix}
    \begin{pmatrix} \hat{x}_{r} \\ \hat{x}_{\Pi}\end{pmatrix}
    = \underbrace{\begin{pmatrix} \hat{d}_{r} \\ \hat{d}_{\Pi}\end{pmatrix}}_{\hat{d}},
\end{equation}
where $I$ is the identity matrix, $\hat{S}_{\Pi}=A_{\Pi\Pi}-\hat{A}_{ \Pi r} A_{rr}^{-1}\hat{A}_{ \Pi r}^{T}$ and $\hat{x}_{r}$ contains the subdomain interior and interface degrees of freedom, and $\hat{x}_{\Pi}$ corresponds to the coarse-level degrees of freedom, which are located at the corners of the subdomains. We write $\hat{A}$ in (\ref{partial-problem}) in factorization form to easily separate the action on the coarse degrees of freedom, and to find the corresponding symbol of $\hat{A}^{-1}$. If we define
\begin{equation*}
 P= \begin{pmatrix}
  -A_{rr}^{-1}\hat{A}_{\Pi r}^{T}\\
  I
  \end{pmatrix},
\end{equation*}
then the Schur complement is  simply the Galerkin coarse operator, $\hat{S}_{\Pi}=P^{T}\hat{A}P$, and the block-factorization solve for $\hat{A}$  is  equivalent to a two-level additive multigrid method with exact $F$-relaxation using
\begin{equation*}
   S_{F}= \begin{pmatrix}
  A_{rr}^{-1}& 0\\
  0 & 0
  \end{pmatrix},
\end{equation*}
to define an ``ideal'' relaxation scheme to complement the coarse-grid correction defined by $P$.

In the partially subassembled problem (\ref{partial-problem}), we need to solve a coarse problem related to $\hat{S}_{\Pi}$. We can either solve this coarse problem exactly (corresponding to a two-level method, where the Schur complement is inverted exactly) or inexactly (as a three-level method), where the lumped and Dirichlet preconditioners defined above are used recursively to solve this problem.

\subsection{Exact and Inexact Solve for the Schur Complement}
 Let
\begin{equation*}
  \mathcal{K}_{LD} = \begin{pmatrix}
      A_{rr} & 0\\
     \hat{A}_{ \Pi r} & \hat{S}_{\Pi}
    \end{pmatrix},\,\
    \mathcal{K}_{U}= \begin{pmatrix}
      I & A_{rr}^{-1}\hat{A}_{\Pi r}^{T}\\
    0 & I
    \end{pmatrix},
\end{equation*}
and note that  $\mathcal{K}_{LD}$ and $\mathcal{K}_{U}$ are formed
from the LDU factorization of
$\hat{A}$ as in (\ref{partial-problem}). For $i=1,2,j=0,1,2$, let
$G_{i,j}$ denote the preconditioned operators for  two- and
three-level variants of BDDC, where $i$ and $j$ denote using $M_i$ and
$M_{s,j}$ (with $M_{s,0}:=\hat{S}_{\Pi})$ as preconditioners for the
fine and coarse problems, respectively, where $M^{-1}_{s,j}$ stands
for applying the preconditioner $M_j$ to the Schur complement matrix, $\hat{S}_{\Pi}$.  By standard calculation, we can write
\begin{equation*}
  G_{i,j} = \mathcal{R}^{T}_i\mathcal{K}_{U}^{-1}\mathcal{P}_j\mathcal{K}_{LD}^{-1}\mathcal{R}_iA,
\end{equation*}
with
 \begin{equation*}
    \mathcal{P}_j =\begin{pmatrix}
      I & 0\\
    0 &  M^{-1}_{s,j}\hat{S}_{\Pi}
    \end{pmatrix}.
\end{equation*}

\begin{remark}
When $j=0$, $G_{i,j}$ is a two-level method, solving the Schur complement problem exactly, as $\mathcal{P}_0\equiv I$.  Note that, for the three-level variants $(j=1,2)$,
\begin{equation*}
   \mathcal{P}_j\mathcal{K}_{LD}^{-1} =\begin{pmatrix}
      I & 0\\
    0 & M_{s,j}^{-1}\hat{S}_{\Pi}
    \end{pmatrix}
    \begin{pmatrix}
      A_{rr}^{-1} & 0\\
     -\hat{S}_{\Pi}^{-1}\hat{A}_{ \Pi r}A_{rr}^{-1} & \hat{S}_{\Pi}^{-1}
    \end{pmatrix}= \begin{pmatrix}
      A_{rr}^{-1} & 0\\
     -M_{s,j}^{-1}\hat{A}_{ \Pi r}A_{rr}^{-1} & M_{s,j}^{-1}
    \end{pmatrix},
\end{equation*}
corresponding to replacing  $\hat{S}_{\Pi}$ in (\ref{partial-problem}) by $M_{s,j}$ representing the inexact solve. From this, it is clear that $G_{i,j}$ can be applied without directly applying the inverse of $\hat{S}_{\Pi}$.
\end{remark}

{{
\begin{theorem}\label{Gij-bound}
The eigenvalues of $G_{i,j}$ are real and  bounded below by 1.
\end{theorem}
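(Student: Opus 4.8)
The plan is to expose the symmetric structure hidden in $G_{i,j} = \mathcal{R}_i^T\mathcal{K}_U^{-1}\mathcal{P}_j\mathcal{K}_{LD}^{-1}\mathcal{R}_i A$ and then reduce the lower bound to the classical two-level BDDC estimate. The starting point is the symmetric factorization $\hat A = \mathcal{K}_U^T\,\diag(A_{rr},\hat S_\Pi)\,\mathcal{K}_U$, valid because $\mathcal{K}_{LD} = \mathcal{K}_U^T\,\diag(A_{rr},\hat S_\Pi)$. A short block computation starting from the expression for $\mathcal{P}_j\mathcal{K}_{LD}^{-1}$ in the preceding remark then gives
\[
\mathcal{K}_U^{-1}\mathcal{P}_j\mathcal{K}_{LD}^{-1} \;=\; \mathcal{K}_U^{-1}\,\diag\!\bigl(A_{rr}^{-1},M_{s,j}^{-1}\bigr)\,\mathcal{K}_U^{-T} \;=:\; \widehat{M}_j^{-1},
\]
a symmetric positive definite approximate inverse of $\hat A$, with $\widehat{M}_j = \mathcal{K}_U^T\,\diag(A_{rr},M_{s,j})\,\mathcal{K}_U$. (Here $A$, $\hat A$, $A_{rr}$ and $\hat S_\Pi$ are symmetric positive definite, and so is $M_{s,j}$: it is $\hat S_\Pi$ for $j=0$ and a BDDC preconditioner of $\hat S_\Pi$ for $j=1,2$.) Hence $G_{i,j} = \mathcal{R}_i^T\widehat{M}_j^{-1}\mathcal{R}_i A = B_{i,j}A$, where $B_{i,j} := \mathcal{R}_i^T\widehat{M}_j^{-1}\mathcal{R}_i$ is symmetric and, once $\mathcal{R}_i$ is seen to be injective, positive definite. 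Consequently $G_{i,j}$ is similar (via conjugation by $A^{-1/2}$) to the symmetric matrix $A^{1/2}B_{i,j}A^{1/2}$, so all its eigenvalues are real, and $\lambda_{\min}(G_{i,j})\ge 1$ is equivalent to the operator inequality $B_{i,j}\succeq A^{-1}$.

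To prove $B_{i,j}\succeq A^{-1}$ I would bring in the unscaled injection $R\colon V_h\to\hat V_h$ that copies each global degree of freedom to all of its duplicates in the broken space, and establish the two identities $R^T\mathcal{R}_i = I$ and $R^T\hat A R = A$. For $i=1$ both follow at once from the definitions of $\mathcal{R}_1$ and $\hat A$; for $i=2$ they follow once one checks that $R^TJ_D^T = 0$, which comes from summing the formula~(\ref{JD-define}) over all subdomains meeting at a given interface node and using $\sum_{j\in\mathcal{N}_{\boldsymbol{x}}}\delta_j(\boldsymbol{x}) = 1$. In particular $R^T\mathcal{R}_i = I$ furnishes a left inverse of $\mathcal{R}_i$, so $\mathcal{R}_i$ is injective. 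Now fix a vector $g$, put $u = A^{-1}g$, and use the Cauchy--Schwarz identity $g^TB_{i,j}g = (\mathcal{R}_i g)^T\widehat{M}_j^{-1}(\mathcal{R}_i g) = \sup_{\hat w\ne 0}(\hat w^T\mathcal{R}_i g)^2/(\hat w^T\widehat{M}_j\hat w)$. The particular test vector $\hat w = Ru$, combined with $R^T\mathcal{R}_i = I$ and $R^T\hat A R = A$, gives $\hat w^T\mathcal{R}_i g = u^Tg = u^TAu = (Ru)^T\hat A(Ru)$, whence
\[
g^TB_{i,j}g \;\ge\; \frac{(u^TAu)^2}{(Ru)^T\widehat{M}_j(Ru)} \;\ge\; u^TAu \;=\; g^TA^{-1}g
\]
as soon as $(Ru)^T\widehat{M}_j(Ru)\le(Ru)^T\hat A(Ru)$. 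Thus the whole theorem collapses to the single operator inequality $\widehat{M}_j\preceq\hat A$.

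This inequality is where the two- versus three-level distinction enters. From the two factorizations, $\hat A - \widehat{M}_j = \mathcal{K}_U^T\,\diag(0,\hat S_\Pi - M_{s,j})\,\mathcal{K}_U$, so $\widehat{M}_j\preceq\hat A$ is equivalent to $M_{s,j}\preceq\hat S_\Pi$. For $j=0$ this is trivial ($M_{s,0} = \hat S_\Pi$, and $\widehat{M}_0 = \hat A$ recovers $G_{i,0} = M_i^{-1}A$). For $j=1,2$, $M_{s,j}^{-1}\hat S_\Pi$ is precisely the preconditioned operator of a two-level BDDC method applied to the coarse problem $\hat S_\Pi$, which carries the same algebraic structure (an analogous partially subassembled coarse matrix and scaled/unscaled injections satisfying the same two identities relative to $\hat S_\Pi$); hence $M_{s,j}\preceq\hat S_\Pi$, i.e.\ $\lambda_{\min}(M_{s,j}^{-1}\hat S_\Pi)\ge 1$, is the $j=0$ conclusion applied one level down, and is in any case the classical two-level BDDC/FETI-DP lower bound~\cite{li2007use}. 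Chaining the three steps proves $\lambda_{\min}(G_{i,j})\ge 1$.

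I expect the main obstacle to be the variational step of the second paragraph --- picking the test vector so that Cauchy--Schwarz lands exactly on $\widehat{M}_j\preceq\hat A$ --- together with the bookkeeping for the identities $R^T\mathcal{R}_i = I$ and $R^T\hat A R = A$ in the Dirichlet case $i=2$, where $R^T$ must be shown to annihilate the discrete harmonic correction $J_D^T\mathcal{H}^T$. Once the symmetric factorization $\hat A = \mathcal{K}_U^T\,\diag(A_{rr},\hat S_\Pi)\,\mathcal{K}_U$ is available, the rest is routine linear algebra.
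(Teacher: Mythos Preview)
Your proposal is correct and follows essentially the same strategy as the paper: both establish the symmetric factorization $\widehat{M}_j = \mathcal{K}_U^T\diag(A_{rr},M_{s,j})\mathcal{K}_U$ to obtain reality of the spectrum, reduce the lower bound to $M_{s,j}\preceq\hat S_\Pi$ via $\hat A - \widehat{M}_j = \mathcal{K}_U^T\diag(0,\hat S_\Pi - M_{s,j})\mathcal{K}_U$, and then invoke the two-level BDDC lower bound on the coarse Schur complement. The only difference is that the paper delegates the step from $\widehat{M}_j\preceq\hat A$ to $\lambda_{\min}(G_{i,j})\ge 1$ to \cite[Theorem~4]{li2007use}, whereas you unpack that step explicitly with the Cauchy--Schwarz variational argument and the identities $R^T\mathcal{R}_i = I$ and $R^T\hat A R = A$.
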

\begin{proof}
This result is well-known \cite{brenner2007bddc,li2006feti} for the two-level methods, $G_{i,0}$.  For $j\neq 0$, note that
  \begin{eqnarray*}
\mathcal{K}_{LD}\mathcal{P}_j^{-1}\mathcal{K}_{U}
                  &=& \begin{pmatrix}
      A_{rr} & 0\\
      \hat{A}_{\Pi} & \hat{S}_{\Pi}
    \end{pmatrix}
    \begin{pmatrix}
      I &0\\
    0 & \hat{S}_{\Pi}^{-1}M_{s,j}
    \end{pmatrix}
    \begin{pmatrix}
      I &A_{rr}^{-1}A_{\Pi,r}^T\\
    0 & I
    \end{pmatrix} \\
   & =&\mathcal{K}_{U}^T\begin{pmatrix}
      A_{rr} &0\\
    0 & M_{s,j}
    \end{pmatrix}\mathcal{K}_{U},
  \end{eqnarray*}
which is symmetric and positive definite (SPD), ensuring the eigenvalues of $G_{i,j}$ are real.
  Similarly, $\hat{A}$ can be rewritten as
    \begin{eqnarray*}
    \hat{A} &=& \begin{pmatrix}
      I & 0\\
      \hat{A}_{\Pi r} A_{rr}^{-1} & I
    \end{pmatrix}
    \begin{pmatrix}
      A_{rr} &0\\
    0 & \hat{S}_{\Pi}
    \end{pmatrix}
    \begin{pmatrix}
      I & A_{rr}^{-1}\hat{A}_{\Pi r}^{T}\\
    0 & I
    \end{pmatrix}=\mathcal{K}_{U}^T\begin{pmatrix}
      A_{rr} &0\\
    0 & \hat{S}_{\Pi}
    \end{pmatrix}\mathcal{K}_{U}.
  \end{eqnarray*}
Since $\hat{A}$ is SPD, so is $\hat{S}_{\Pi}$. We form $M_{s,j}$ from
$\hat{S}_{\Pi}$ in the same way. As for the two-level case
\cite{brenner2007bddc,li2006feti}, we can bound
\begin{equation*}
 u_r^TM_{s,j}u_r \leq u_r^T\hat{S}_{\Pi}u_r, \forall u_r.
\end{equation*}
Writing $u=(u_I,u_r)$, we have
\begin{equation*}
 u_I^T A_{rr}u_I+ u_r^TM_{s,j}u_r\leq u_I^T A_{rr}u_I+ u_r^T\hat{S}_{\Pi}u_r,
\end{equation*}
which leads to
\begin{equation}\label{three-ineq}
 u^T \begin{pmatrix}
      A_{rr} & 0\\
    0 &  M_{s,j}
    \end{pmatrix}u \leq u^T \begin{pmatrix}
      A_{rr} & 0\\
    0 & \hat{S}_{\Pi}
    \end{pmatrix}u.
\end{equation}
Now, for any $y$, taking $u=\mathcal{K}_{U}y$ in (\ref{three-ineq}), we have
\begin{equation*}
y^T  \mathcal{K}_{U}^T\begin{pmatrix}
      A_{rr} & 0\\
    0 &  M_{s,j}
    \end{pmatrix} \mathcal{K}_{U} y \leq y^T \mathcal{K}_{U}^T \begin{pmatrix}
      A_{rr} & 0\\
    0 & \hat{S}_{\Pi}
    \end{pmatrix}\mathcal{K}_{U} y,
\end{equation*}
which is
\begin{equation*}
 y^T \mathcal{K}_{LD}\mathcal{P}_j^{-1}\mathcal{K}_{U} y\leq y^T\hat{A}y.
\end{equation*}
According to \cite[Theorem 4]{li2007use}, we then have
\begin{equation*}
   \langle u,u\rangle_A \leq \langle G_{i,j}u, u\rangle_A, \forall u,
\end{equation*}
which means that the smallest eigenvalue of $G_{i,j}$ is not less than 1.
  \end{proof}

\begin{remark}
In \cite{li2007use},  $\mathcal{K}_{LD}\mathcal{P}_j^{-1}\mathcal{K}_{U}$ represents one or several multigrid V-cycles
or W-cycles for solving the global partially subassembled problem (\ref{partial-problem}). In our setting, we use the lumped and Dirichlet preconditioners recursively, which also keeps the lower bound, 1, for the inexact solve.
\end{remark}
}
}

\begin{remark}\label{THM-Three-level}
Standard bounds (see, e.g., \cite{li2007use,MR2282536}) on the
condition numbers of the three-level preconditioned operators are that
there exists $\mathfrak{C}_{i,j} \geq 0$ such that $\kappa(G_{i,j}) \leq \mathfrak{C}_{i,j}\Upsilon_i\Upsilon_j$, where $\Upsilon_1=\frac{H}{h}(1+{\rm log}\frac{H}{h})$ and $\Upsilon_2=(1+{\rm log}\frac{H}{h})^2$.
\end{remark}

\subsection{Multiplicative Preconditioners}\label{2multiplicative-operator}
 As we shall see, the bounds above are relatively sharp and  the performance of both preconditioners degrades with subdomain size and number of levels. To attempt to counteract this, we consider multiplicative combinations of these preconditioners with a simple diagonal scaling operator, mimicking the use of weighted Jacobi relaxation in classical multigrid methods.  We use ${G}_{i,j}^{f}$ to denote the multiplicative preconditioned operator based on ${G}_{i,j}$ with diagonal scaling on the fine level. Here,

\begin{equation}\label{Multi-fine-level}
  G_{i,j}^{f} =  G_{i,j}+\omega D^{-1}A(I- G_{i,j}), \,\, i=1,2,\,\,j=0,1,2,
  \end{equation}
where $D$ is the diagonal of $A$ and $\omega$ is a chosen relaxation parameter. Note that $I-G_{i,j}^{f}=(I-\omega D^{-1}A)(I-G_{i,j})$, so $G_{i,j}^f$ represents the multiplicative combination given.
{
{
\begin{theorem}\label{Real-eig-thm}
The eigenvalues of $G_{i,j}^f$ are real.
\end{theorem}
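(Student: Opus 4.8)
The plan is to construct an explicit similarity transformation that writes $G_{i,j}^f$ as the identity minus the product of a symmetric matrix with a symmetric positive semidefinite one, and then to invoke the standard fact that such a product has only real eigenvalues. Since $A$ is SPD, $A^{1/2}$ is available, and since $D$ is the entrywise-positive diagonal of $A$, both $D^{-1}$ and $\widetilde D:=A^{1/2}D^{-1}A^{1/2}$ are SPD. Starting from the identity recorded just after (\ref{Multi-fine-level}), $I-G_{i,j}^f=(I-\omega D^{-1}A)(I-G_{i,j})$, conjugation by $A^{1/2}$ yields $A^{1/2}(I-G_{i,j}^f)A^{-1/2}=(I-\omega\widetilde D)(I-\widetilde G)$, where $\widetilde G:=A^{1/2}G_{i,j}A^{-1/2}$ and $I-\omega\widetilde D$ is symmetric.

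The crux is that $\widetilde G$ is symmetric with spectrum in $[1,\infty)$. To see this I would unpack $G_{i,j}$ following the proof of Theorem~\ref{Gij-bound}: since $\mathcal{K}_U^{-1}\mathcal{P}_j\mathcal{K}_{LD}^{-1}=(\mathcal{K}_{LD}\mathcal{P}_j^{-1}\mathcal{K}_U)^{-1}$, we may write $G_{i,j}=WA$ with $W:=\mathcal{R}_i^T(\mathcal{K}_{LD}\mathcal{P}_j^{-1}\mathcal{K}_U)^{-1}\mathcal{R}_i$ (which reduces to $\mathcal{R}_i^T\hat A^{-1}\mathcal{R}_i$ when $j=0$). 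That proof establishes that $\mathcal{K}_{LD}\mathcal{P}_j^{-1}\mathcal{K}_U$ is SPD, so its inverse is SPD and hence $W$ is symmetric positive semidefinite. Then $\widetilde G=A^{1/2}WA^{1/2}$ is symmetric positive semidefinite; moreover it is similar to $G_{i,j}$, whose eigenvalues are all at least $1$ by Theorem~\ref{Gij-bound}. Consequently $\widetilde G-I$ is symmetric positive semidefinite, and $(I-\omega\widetilde D)(I-\widetilde G)=-(I-\omega\widetilde D)(\widetilde G-I)$ is the product of a symmetric matrix with a symmetric positive semidefinite one.

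To finish, I would use the elementary lemma that $SP$ has real eigenvalues whenever $S$ is symmetric and $P$ is symmetric positive semidefinite: writing $P=P^{1/2}P^{1/2}$, the matrices $SP=(SP^{1/2})P^{1/2}$ and $P^{1/2}(SP^{1/2})=P^{1/2}SP^{1/2}$ have the same characteristic polynomial, and the latter is symmetric. Applying this with $S=I-\omega\widetilde D$, $P=\widetilde G-I$ shows $(I-\omega\widetilde D)(I-\widetilde G)$ has real eigenvalues; therefore so does $I-A^{1/2}G_{i,j}^fA^{-1/2}$, hence $A^{1/2}G_{i,j}^fA^{-1/2}$, and hence $G_{i,j}^f$, which is similar to it.

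The only genuinely nontrivial point is verifying that $A^{1/2}G_{i,j}A^{-1/2}$ is literally symmetric and positive semidefinite with all eigenvalues at least $1$ — not merely similar to a symmetric matrix — since this is what lets me combine the two symmetric factors; that is why the plan re-derives the factorization $G_{i,j}=WA$ with $W$ symmetric rather than quoting only ``real eigenvalues'' from Theorem~\ref{Gij-bound}. A minor related caveat is that $\widetilde G-I$ is in general only positive semidefinite, so the product lemma must be stated for semidefinite (rather than definite) $P$; the characteristic-polynomial argument accommodates this without change.
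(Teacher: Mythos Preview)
Your proof is correct and follows essentially the same approach as the paper: conjugate by $A^{1/2}$, use the factorization from Theorem~\ref{Gij-bound} to see that $A^{1/2}G_{i,j}A^{-1/2}$ is symmetric with spectrum in $[1,\infty)$, and then reduce to the fact that the product of a symmetric matrix with a symmetric positive semidefinite one has real spectrum via the $AB\leftrightarrow BA$ square-root trick. The paper presents the last step by explicitly writing $\lambda\big((I-\omega\widetilde D)(\widetilde G-I)\big)=\lambda\big((\widetilde G-I)^{1/2}(I-\omega\widetilde D)(\widetilde G-I)^{1/2}\big)$, which is exactly your characteristic-polynomial lemma applied in the same direction.
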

\begin{proof}

 First, recall that $A$ is SPD, so it has a unique SPD matrix square
 root, $A^{\frac{1}{2}}$. Note that
 $A^{\frac{1}{2}}G_{i,j}A^{-\frac{1}{2}}=A^{\frac{1}{2}}\mathcal{R}_i^T(\mathcal{K}_{U}^{-1}\mathcal{P}_j\mathcal{K}_{LD}^{-1})
 \mathcal{R}_iA^{\frac{1}{2}}$.  From Theorem \ref{Gij-bound}, we know
 that  $\mathcal{K}_{U}^{-1}\mathcal{P}_j\mathcal{K}_{LD}^{-1}$ is
 symmetric, so $A^{\frac{1}{2}}G_{i,j}A^{-\frac{1}{2}}$ is symmetric
 as well.

$G_{i,j}^f-I$ is similar to $A^{\frac{1}{2}}(G_{i,j}^f-I)A^{-\frac{1}{2}}$, which can be rewritten as
\begin{eqnarray*}
  A^{\frac{1}{2}}(G_{i,j}^f-I)A^{-\frac{1}{2}} &=& A^{\frac{1}{2}}(I-\omega D^{-1}A)(G_{i,j}-I)A^{-\frac{1}{2}} \\
  &=& (I-\omega A^{\frac{1}{2}}D^{-1}A^{\frac{1}{2}})(A^{\frac{1}{2}}G_{i,j}A^{-\frac{1}{2}}-I)
\end{eqnarray*}

From Theorem \ref{Gij-bound}, we know that the eigenvalues of $G_{i,j}$ are not less than 1. Thus, $A^{\frac{1}{2}}G_{i,j}A^{-\frac{1}{2}}-I$ is symmetric positive semi-definite, and  has a unique symmetric positive semi-definite square root. Using the fact that $\lambda(AB)=\lambda(BA)$, we have
\begin{eqnarray*}
  \lambda(G_{i,j}^f-I) &=& \lambda\Big((I-\omega A^{\frac{1}{2}}D^{-1}A^{\frac{1}{2}})(A^{\frac{1}{2}}G_{i,j}A^{-\frac{1}{2}}-I)\Big)\\
                       &=&\lambda\Big((A^{\frac{1}{2}}G_{i,j}A^{-\frac{1}{2}}-I)^{\frac{1}{2}}(I-\omega A^{\frac{1}{2}}D^{-1}A^{\frac{1}{2}})(A^{\frac{1}{2}}G_{i,j}A^{-\frac{1}{2}}-I)^{\frac{1}{2}}\Big)
\end{eqnarray*}
Note that $(A^{\frac{1}{2}}G_{i,j}A^{-\frac{1}{2}}-I)^{\frac{1}{2}}(I-\omega A^{\frac{1}{2}}D^{-1}A^{\frac{1}{2}})(A^{\frac{1}{2}}G_{i,j}A^{-\frac{1}{2}}-I)^{\frac{1}{2}}$ is symmetric. Thus, the eigenvalues of $G_{i,j}^f-I$ are real and so are those of $G_{i,j}^f$.
\end{proof}

}}


 Another variant is the use of multiplicative preconditioning on the coarse level with a similar diagonal scaling. We use $G_{i,j}^{c}$ to denote the resulting multiplicative preconditioner. Here,

 \begin{equation}\label{Multi-coarse-level}
  G_{i,j}^{c} = \mathcal{R}^{T}_i\mathcal{K}_{U}^{-1}\mathcal{P}_j^{c}\mathcal{K}_{LD}^{-1}\mathcal{R}_iA,\,\,  i,j=1,2,
\end{equation}
where
\begin{equation*}
\mathcal{P}_j^{c} =\begin{pmatrix}
      I & 0\\
    0 &  G_{c,j}
    \end{pmatrix},
\end{equation*}
in which
\begin{equation*}
 G_{c,j} = M^{-1}_{s,j}\hat{S}_{\Pi} + \omega D_s^{-1}\hat{S}_{\Pi}(I-M^{-1}_{s,j}\hat{S}_{\Pi}),
\end{equation*}
where $D_s$ is the diagonal of $\hat{S}_{\Pi}$.

Instead of using a single sweep of Jacobi in $G_{c,j}$, we can consider a symmetrized Jacobi operator $G_{c,j}^s$, where $I-G_{c,j}^s=(I-\omega_1D_s^{-1}\hat{S}_{\Pi})(I-M_{s,j}^{-1}\hat{S}_{\Pi})(I-\omega_2D_s^{-1}\hat{S}_{\Pi})$; that is,
\begin{equation*}
 G_{c,j}^{s} = G_{c,j}+\omega_2(I-G_{c,j})D_s^{-1}\hat{S}_{\Pi},
\end{equation*}
then $G_{i,j}^{c}$ changes to
\begin{equation}
 G_{i,j}^{s,c} =  \mathcal{R}^{T}_i\mathcal{K}_{U}^{-1}\mathcal{P}_j^{s,c}\mathcal{K}_{LD}^{-1}\mathcal{R}_iA,\,\,  i,j=1,2,
\end{equation}
for $\mathcal{P}_j^{s,c}$ defined as $\mathcal{P}_j^{c}$ but with  $G_{c,j}^s$ in the $(2,2)$-block.

Finally, we can also apply the multiplicative operators based on diagonal scaling on both  the fine and coarse levels. We denote this as
\begin{equation}\label{Multi-fine-coarse-level}
  G_{i,j}^{f,c} =  G_{i,j}^{c}+\omega_2 D^{-1}A(I- G_{i,j}^{c}), \,\, i=1,2,\,\,j=1,2,
  \end{equation}
where $D$ is the diagonal of $A$ and $\omega_2$ is a chosen relaxation parameter.
{{
\begin{remark}
The eigenvalues of operators $G_{i,j}^c, G_{i,j}^{s,c}$, and
$G_{i,j}^{f,c}$ are not generally all real.  When $\omega_1=\omega_2$,
$ G_{i,j}^{s,c} $ is  similar to a symmetric matrix and we observe a
real spectrum in numerical results, but not for all cases when
$\omega_1 \neq \omega_2$. In most situations, $G_{i,j}^c, G_{i,j}^{s,c}$ and $G_{i,j}^{f,c}$ have complex eigenvalues.
\end{remark}
}}

 In the following, we focus on analyzing the spectral properties of the above preconditioned operators  by local Fourier analysis \cite{MR1807961}. The main  focus of this work is on the operators $\mathcal{K}_{LD},\mathcal{K}_{U},$ and $\mathcal{P}_j$, because the Fourier representations of other operators are just combinations of these three and some simple additional terms.

\section{Local Fourier Analysis}\label{sec:LFA-analysis}
To apply LFA to the BDDC-like methods proposed here, we first review some terminology of classical LFA.  We consider a two-dimensional infinite uniform grid, $\mathbf{G}_{h}$, with
\begin{equation}\label{Inf-mesh}
  \mathbf{G}_{h}=\big\{\boldsymbol{x}_{i,j}:=(x_i,x_j)=(ih,jh), (i,j)\in \mathbb{Z}^2\big\},
\end{equation}
and Fourier functions $\psi(\boldsymbol{\theta},\boldsymbol{x}_{i,j})= e^{\iota\boldsymbol{\theta}\cdot\boldsymbol{x}_{i,j}/h}$ on $\mathbf{G}_{h}$, where $\iota^2=-1$ and $\boldsymbol{\theta}=(\theta_1,\theta_2)$.  Let $L_h$ be a Toeplitz operator acting on $\mathbf{G}_{h}$  as
\begin{equation*}\label{defi-symbol}
  L_{h}  \overset{\wedge}{=} [s_{\boldsymbol{\kappa}}]_{h} \,\,(\boldsymbol{\kappa}=(\kappa_{1},\kappa_{2})\in \mathbb{Z}^2); \, L_{h}w_{h}(\boldsymbol{x})=\sum_{\boldsymbol{\kappa}\in\boldsymbol{V}}s_{\boldsymbol{\kappa}}w_{h}(\boldsymbol{x}+\boldsymbol{\kappa}h),
  \end{equation*}
with constant coefficients $s_{\boldsymbol{\kappa}}\in \mathbb{R} \,(\textrm{or} \,\,\mathbb{C})$, where $w_{h}(\boldsymbol{x})$ is a function on $\mathbf{G}_{h}$. Here, $\boldsymbol{V}$ is taken to be a finite index set. Note that since $L_h$ is Toeplitz, it is diagonalized by the Fourier modes $\psi(\boldsymbol{\theta},\boldsymbol{x})$.

\begin{definition}\label{formulation-symbol}
 If for all grid functions, $\psi(\boldsymbol{\theta},\boldsymbol{x})$,
 \begin{equation*}
   L_{h}\psi(\boldsymbol{\theta},\boldsymbol{x})= \widetilde{L}_{h} (\boldsymbol{\theta})\psi(\boldsymbol{\theta},\boldsymbol{x}),
 \end{equation*}
 we call $\widetilde{L}_{h}(\boldsymbol{\theta})=\displaystyle\sum_{\boldsymbol{\kappa}\in\boldsymbol{V}}s_{\boldsymbol{\kappa}}e^{\iota\boldsymbol{\theta}\boldsymbol{\kappa}}$ the symbol of $L_{h}$.
\end{definition}
\begin{remark}
  In Definition \ref{formulation-symbol}, the operator $L_{h}$ acts on a single function on $\mathbf{G}_{h}$, so  $\widetilde{L}_{h}$ is a scalar. For an operator mapping vectors on $\mathbf{G}_{h}$ to vectors on $\mathbf{G}_{h}$, the symbol will be extended to be a matrix.
\end{remark}
\subsection{Change of Fourier Basis}\label{intr-new-basis}

Here, we discuss LFA for a domain decomposition method.   While the classical basis set for LFA, denoted $\mathbf{E}_h$ below, could be used, we find it is substantially more convenient to make use of a transformed ``sparse'' basis, introduced here as $\mathbf{F}_H$. This basis allows a natural expression of the periodic structures in domain decomposition preconditioners that vary with subdomain size. We treat each subdomain problem as one macroelement patch, and each subdomain block in the  global problem is diagonalized by a coupled set of Fourier modes introduced in the following. Because each subdomain has the same size, $p\times p$, we consider the high and low frequencies for coarsening by factor $p$, given by
\begin{equation*}
  \boldsymbol{\theta}\in T^{{\rm low}} =\left[-\frac{\pi}{p},\frac{\pi}{p}\right)^{2}, \, \boldsymbol{\theta}\in T^{{\rm high}} =\displaystyle \left[-\frac{\pi}{p},\frac{(2p-1)\pi}{p}\right)^{2} \bigg\backslash \left[-\frac{\pi}{p},\frac{\pi}{p}\right)^{2}.
\end{equation*}
Let $\boldsymbol{\theta}^{(q,r)}=(\theta_1^{(q)},\theta_2^{(r)})$, where $\theta_1^{(q)}=\theta_1^{(0)}+\frac{2\pi q}{p}$ and $\theta_2^{(r)}=\theta_2^{(0)}+\frac{2\pi r}{p}$ for $0\leq q,r <p$.
For any given $\boldsymbol{\theta}^{(0,0)}=(\theta_1^{(0)},\theta_2^{(0)})\in T^{{\rm low}}$,  we define the $p^2$-dimensional space
\begin{equation}\label{Classical-Basis}
  \mathbf{E}_h(\boldsymbol{\theta}^{(0,0)}):={\rm span}\{\psi(\boldsymbol{\theta}^{(q,r)},\boldsymbol{x}_{s,t})= e^{\iota\boldsymbol{\theta}^{(q,r)}\cdot\boldsymbol{x}_{s,t}/h}: q,r=0,1,\cdots,p-1\},
\end{equation}
as the classical space of Fourier harmonics for factor $p$ coarsening.

For any $\boldsymbol{x}_{s,t}\in \mathbf{G}_h$, we consider a grid function defined as a linear combination of the  $p^2$ basis functions for $\mathbf{E}_h(\boldsymbol{\theta}^{(0,0)})$ with frequencies $\{\boldsymbol{\theta}^{(q,r)}\}^{p-1}_{q,r=0}$ and coefficients $\{\beta_{q,r}\}_{q,r=0}^{p-1}$ as
\begin{equation*}
e_{s,t}:=\sum_{q,r=0}^{p-1}\beta_{q,r}\psi(\boldsymbol{\theta}^{(q,r)},\boldsymbol{x}_{s,t}).
 \end{equation*}
We note that any index $(s,t)$ has a unique representation as $(pm+k,pn+\ell)$ where $(m,n) \in \mathbb{Z}^2$ and $k,\ell \in \{0,1,\cdots,p-1\}$.  From (\ref{Classical-Basis}), we have
 \begin{eqnarray}\label{Basis-Transform}
 e_{pm+k,pn+\ell} &=&\sum_{q,r=0}^{p-1}\beta_{q,r} e^{\iota(\theta_1^{(0)}+\frac{2\pi q}{p})x_s/h}e^{\iota(\theta_2^{(0)}
                     +\frac{2\pi r}{p})x_t/h}\nonumber \\
           &=& \sum_{q,r=0}^{p-1}\beta_{q,r}e^{\iota \theta_1^{(0)}x_s/h}e^{\iota \frac{2\pi q(pm+k)}{p}}e^{\iota\theta_2^{(0)}x_t/h}e^{\frac{2\pi r(pn+\ell)}{p}}\nonumber\\
           &=& \sum_{q,r=0}^{p-1}\beta_{q,r} e^{\iota \frac{2\pi qk}{p}}e^{\iota \theta_1^{(0)}x_s/h} e^{\frac{2\pi r\ell}{p}}e^{\iota\theta_2^{(0)}x_t/h}\nonumber\\
           &=& \bigg(\sum_{q,r=0}^{p-1} \beta_{q,r} e^{\iota \frac{2\pi qk}{p}}e^{\frac{2\pi r\ell}{p}}\bigg) \big(e^{\iota \boldsymbol{\theta}^{(0,0)}\cdot\boldsymbol{x}_{s,t}/h}\big).\nonumber
 \end{eqnarray}
Thus, we can write
\begin{equation}\label{DFT-present-1}
        e_{pm+k,pn+\ell}=\hat{\beta}_{k,\ell}e^{\iota \boldsymbol{\theta}\cdot\boldsymbol{x}_{s,t}/H},
        \end{equation}
with
\begin{equation}\label{DFT-present-2}
        \boldsymbol{\theta}=p\boldsymbol{\theta}^{(0,0)},\, \,{\rm and}\,\,\,\hat{\beta}_{k,\ell}=\sum_{q,r=0}^{p-1}\beta_{q,r} e^{\iota \frac{2\pi qk}{p}}e^{\frac{2\pi r\ell}{p}}.
 \end{equation}
In other words,  for any point $(s,t)$ with $\text{mod}(s,p) = k$ and $\text{mod}(t,p) = \ell$, $e_{s,t}$ can be reconstructed from a single Fourier mode with coefficient $\hat{\beta}_{k,\ell}$. Thus, on the mesh $\mathbf{G}_h$ defined in (\ref{Inf-mesh}), the periodicity of the basis functions in $ \mathbf{E}_h(\boldsymbol{\theta}^{(0,0)})$ can also be represented by a pointwise basis on each $p\times p$-block.

Based on (\ref{DFT-present-1}), we consider a ``sparse''  $p^2$-dimensional space as follows
\begin{equation}\label{FG-Sparse-Basis}
  \mathbf{F}_H(\boldsymbol{\theta}):= {\rm span}\{\varphi_{k,\ell}(\boldsymbol{\theta},\boldsymbol{x}_{s,t})= e^{\iota\boldsymbol{\theta}\cdot\boldsymbol{x}_{s,t}/H}\chi_{k,\ell}(\boldsymbol{x}_{s,t}): k,\ell=0,1,\cdots, p-1 \},
\end{equation}
where   $\boldsymbol{\theta}\in[-\pi,\pi)$ and
\begin{equation*}
\chi_{k,\ell}(\boldsymbol{x}_{s,t})=\left\{
 \begin{aligned}
& 1, \quad\,\,\text{if mod}(s,p)=k,\, \text{and}\,\,\text{mod}(t,p)=\ell,\\
& 0,         \quad \,\,\text{otherwise}.
\end{aligned}
\right.
\end{equation*}
Note that, with this notation, (\ref{DFT-present-1}) can be rewritten as
\begin{equation}\label{sparse-basis-presentation}
        e_{pm+k,pn+\ell}=\hat{\beta}_{k,\ell}\varphi_{k,\ell}(\boldsymbol{\theta},\boldsymbol{x}_{s,t}).
      \end{equation}

\begin{theorem}\label{Thm-two-bases}
  $\mathbf{E}_h(\boldsymbol{\theta}^{(0,0)})$  and $\mathbf{F}_H(p\boldsymbol{\theta}^{(0,0)})$ are equivalent.
\end{theorem}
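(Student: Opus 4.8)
The plan is to exhibit an explicit linear bijection between the two $p^2$-dimensional spaces, using the discrete Fourier transform relation already derived in Equations~(\ref{DFT-present-1})--(\ref{DFT-present-2}). Since both $\mathbf{E}_h(\boldsymbol{\theta}^{(0,0)})$ and $\mathbf{F}_H(p\boldsymbol{\theta}^{(0,0)})$ are $p^2$-dimensional, it suffices to show that one is contained in the other, or, more symmetrically, that the spanning sets are related by an invertible change of coordinates. First I would take an arbitrary element of $\mathbf{E}_h(\boldsymbol{\theta}^{(0,0)})$, written as $e_{s,t} = \sum_{q,r=0}^{p-1}\beta_{q,r}\psi(\boldsymbol{\theta}^{(q,r)},\boldsymbol{x}_{s,t})$ with coefficient vector $(\beta_{q,r})$, and apply the computation leading to (\ref{sparse-basis-presentation}): for $(s,t)=(pm+k,pn+\ell)$ we get $e_{pm+k,pn+\ell} = \hat\beta_{k,\ell}\,\varphi_{k,\ell}(p\boldsymbol{\theta}^{(0,0)},\boldsymbol{x}_{s,t})$, so that $e_{s,t} = \sum_{k,\ell=0}^{p-1}\hat\beta_{k,\ell}\,\varphi_{k,\ell}(p\boldsymbol{\theta}^{(0,0)},\boldsymbol{x}_{s,t})$ globally, where $(\hat\beta_{k,\ell})$ is the two-dimensional DFT of $(\beta_{q,r})$ given by (\ref{DFT-present-2}). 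This shows $\mathbf{E}_h(\boldsymbol{\theta}^{(0,0)}) \subseteq \mathbf{F}_H(p\boldsymbol{\theta}^{(0,0)})$.

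Next I would argue invertibility of the coefficient map $(\beta_{q,r}) \mapsto (\hat\beta_{k,\ell})$. The map in (\ref{DFT-present-2}) is exactly the tensor product of two one-dimensional $p$-point DFT matrices, whose matrix entries are $e^{\iota 2\pi qk/p}$ (resp.\ $e^{\iota 2\pi r\ell/p}$); each such matrix is (a scalar multiple of) a unitary Vandermonde matrix in the $p$-th roots of unity, hence nonsingular, and therefore so is the tensor product. Consequently the map taking the coefficient vector of an element of $\mathbf{E}_h(\boldsymbol{\theta}^{(0,0)})$ to the coefficient vector of the corresponding element of $\mathbf{F}_H(p\boldsymbol{\theta}^{(0,0)})$ is a linear isomorphism between the two coordinate spaces $\mathbb{C}^{p^2}$. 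Combined with the linear independence of the $\varphi_{k,\ell}$ (which follows because the $\chi_{k,\ell}$ have disjoint supports, so a vanishing linear combination forces each coefficient to vanish), this gives $\dim \mathbf{F}_H(p\boldsymbol{\theta}^{(0,0)}) = p^2 = \dim \mathbf{E}_h(\boldsymbol{\theta}^{(0,0)})$ together with the inclusion above, hence equality of the two spaces, and the explicit isomorphism is the claimed equivalence.

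I do not expect a serious obstacle here; the entire content is repackaging the DFT identity (\ref{DFT-present-1})--(\ref{DFT-present-2}) as a statement about spans, plus the elementary fact that the DFT matrix is invertible. The one point requiring a little care is the bookkeeping of how a single pair $(\beta_{q,r})_{q,r}$ simultaneously determines the values of $e_{s,t}$ on all $p^2$ residue classes modulo $p$ through a single output vector $(\hat\beta_{k,\ell})_{k,\ell}$ — i.e.\ that the per-block pointwise description in $\mathbf{F}_H$ is globally consistent across all blocks $(m,n)\in\mathbb{Z}^2$ — but this is precisely what the chain of equalities preceding (\ref{DFT-present-1}) establishes, since the block index $(m,n)$ drops out of $e^{\iota 2\pi q(pm+k)/p} = e^{\iota 2\pi qk/p}$. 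So the write-up is mainly a matter of citing (\ref{sparse-basis-presentation}) and the nonsingularity of the two-dimensional DFT, then invoking equality of dimensions.
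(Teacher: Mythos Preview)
Your proposal is correct and follows essentially the same approach as the paper: cite the derivation preceding the theorem for the inclusion $\mathbf{E}_h(\boldsymbol{\theta}^{(0,0)})\subseteq\mathbf{F}_H(p\boldsymbol{\theta}^{(0,0)})$, then show the coefficient map $(\beta_{q,r})\mapsto(\hat\beta_{k,\ell})$ is invertible by recognizing it as the tensor product of two one-dimensional Vandermonde/DFT matrices in the $p$-th roots of unity. The paper writes out this transformation matrix $\mathcal{T}=\mathcal{T}_1\otimes\mathcal{T}_1$ explicitly, while you invoke the DFT/Vandermonde invertibility more succinctly and add the observation about disjoint supports of the $\chi_{k,\ell}$, but the content is the same.
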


\begin{proof}
While the derivation above shows directly that $\mathbf{E}_h(\boldsymbol{\theta}^{(0,0)})\subset \mathbf{F}_H(p\boldsymbol{\theta}^{(0,0)})$, we revisit this calculation now to show that the mapping $\{\beta_{q,r}\}\rightarrow \{\hat{\beta}_{k,\ell}\}$ is invertible and, hence, $\mathbf{F}_H(p\boldsymbol{\theta}^{(0,0)})\subset \mathbf{E}_h(\boldsymbol{\theta}^{(0,0)})$ as well.

Let $\mathcal{X}$ be an arbitrary vector with size $p^2\times 1$, denoted as
\begin{equation*}
   \mathcal{X}= \begin{pmatrix} \mathcal{X}_{0}\\ \mathcal{X}_{1}\\ \vdots  \\\mathcal{X}_{p-2} \\\mathcal{X}_{p-1}\end{pmatrix},
\,\,{\rm where}\,\,
   \mathcal{X}_{r}=\begin{pmatrix} \beta_{0,r}\\ \beta_{1,r}\\ \vdots  \\\beta_{p-2,r} \\ \beta_{p-1,r}\end{pmatrix},\quad r=0,1,\cdots,p-1.
\end{equation*}
Then, we define a $p^2\times 1$ vector,  $\mathcal{\hat{X}}$, based on (\ref{DFT-present-2}), as follows
\begin{equation*}
   \mathcal{\hat{X}}= \begin{pmatrix} \mathcal{\hat{X}}_{0}\\  \mathcal{\hat{X}}_{1}\\ \vdots  \\ \mathcal{\hat{X}}_{p-2} \\ \mathcal{\hat{X}}_{p-1}\end{pmatrix},
\,\,{\rm where}\,\,
  \mathcal{\hat{X}}_{\ell}=\begin{pmatrix} \hat{\beta}_{0,\ell} \\  \hat{\beta}_{1,\ell} \\  \vdots  \\ \hat{\beta}_{p-2,\ell} \\ \hat{\beta}_{p-1,\ell}\end{pmatrix},\quad \ell=0,1,\cdots,p-1,
\end{equation*}
in which
\begin{equation*}
  \hat{\beta}_{k,\ell} =\sum_{r=0}^{p-1}\Big(\sum_{q=0}^{p-1}\beta_{q,r} e^{\iota \frac{2\pi qk}{p}}\Big)e^{\frac{2\pi r\ell}{p}},\quad q,r=0,1,\cdots,p-1.
\end{equation*}

Let $\mathcal{T}$ be the matrix of this transformation, $\mathcal{\hat{X}}=\mathcal{T}\mathcal{X}$, and
\begin{equation*}
\mathcal{T}_1 =\begin{pmatrix}
(e^{\iota \frac{2\pi}{p}0})^0 & (e^{\iota \frac{2\pi}{p}1})^{0}    &(e^{\iota \frac{2\pi}{p}2})^{0} &\cdots   &(e^{\iota \frac{2\pi}{p}(p-1)})^{0}\\
(e^{\iota \frac{2\pi}{p}0})^1 & (e^{\iota \frac{2\pi}{p}1})^1    &(e^{\iota \frac{2\pi}{p}2})^1 &\cdots   &(e^{\iota \frac{2\pi}{p}(p-1)})^1\\
(e^{\iota \frac{2\pi}{p}0})^{2} & (e^{\iota \frac{2\pi}{p}1})^{2}    &(e^{\iota \frac{2\pi}{p}2})^{2} &\cdots   &(e^{\iota \frac{2\pi}{p}(p-1)})^{2}\\
\vdots  &\vdots   &\vdots  &\vdots &\vdots \\
(e^{\iota \frac{2\pi}{p}0})^{p-2} & (e^{\iota \frac{2\pi}{p}1})^{p-2}    &(e^{\iota \frac{2\pi}{p}2})^{p-2} &\cdots   &(e^{\iota \frac{2\pi}{p}(p-1)})^{p-2}\\
(e^{\iota \frac{2\pi}{p}0})^{p-1} & (e^{\iota \frac{2\pi}{p}1})^{p-1}    &(e^{\iota \frac{2\pi}{p}2})^{p-1} &\cdots   &(e^{\iota \frac{2\pi}{p}(p-1)})^{p-1}
\end{pmatrix}.
\end{equation*}
Note that $\mathcal{T}_1\mathcal{X}_r$ defines a vector whose $(k+1)$-th entry is $\displaystyle\sum_{q=0}^{p-1}\beta_{q,r}e^{2\pi qk/p}$ and, thus, we see that $\mathcal{T} =\mathcal{T}_1 \otimes \mathcal{T}_1$.

Note that $\mathcal{T}_1$ is a $p\times p$ Vandermonde matrix based on values $d_k =e^{\iota \frac{2\pi k}{p}}$, where $k=0,1,2,\cdots,p-1$. It is obvious that $d_j\neq d_k$ if $j\neq k$. Consequently, $\rm det(\mathcal{T}_1)\neq 0$.
Thus, $\mathcal{T}_1$ is invertible, and so is $\mathcal{T}$.
It follows that $\mathbf{E}_h(\boldsymbol{\theta}^{(0,0)})$ and $\mathbf{F}_H(p\boldsymbol{\theta}^{(0,0)})$ are equivalent.
\end{proof}

\begin{remark}
  Let $z=e^{\iota 2\pi/p}$, be the primitive $p$-th root of unity, and note that $(\mathcal{T}_1)_{i,j} =z^{(j-1)(i-1)}$.  Thus, $\widetilde{\mathcal{T}_1}=\frac{1}{\sqrt{p}}\mathcal{T}_1$ is the unitary discrete Fourier transform (DFT) matrix with $\widetilde{\mathcal{T}_1}^{-1} =\widetilde{\mathcal{T}_1}^{T}$,  where $T$ denotes the conjugate transpose. Thus, $\mathcal{T}_1^{-1}=\frac{1}{p}\mathcal{T}_1^{T}$. Similarly, $\mathcal{T}$ is a scaled version of the two-dimensional unitary Fourier transform matrix, and $\mathcal{T}^{-1}=\frac{1}{p^2}\mathcal{T}^{T}$.
\end{remark}

\begin{remark}\label{remark-on-modified-basis}
\revise{In the notation of the proof of Theorem \ref{Thm-two-bases}, the relation between the coefficients, $\{\beta_{q,r}\}$ of a function in $\mathbf{E}_h(\boldsymbol{\theta}^{(0,0)})$, and the coefficients, $\{\hat{\beta}_{k,\ell}\}$, of the same function in the basis of $\mathbf{F}_H(p\boldsymbol{\theta}^{(0,0)})$ is, simply, $\hat{\boldsymbol{\beta}} =T\boldsymbol{\beta}$, where the vectors, $\hat{\boldsymbol{\beta}}$ and $\boldsymbol{\beta}$, are both assumed to follow lexicographic ordering. Thus, if $\widetilde{L}$ is the symbol of a linear operator acting on the basis from $\mathbf{E}_h(\boldsymbol{\theta}^{(0,0)})$, the equivalent symbol in terms of the basis $\mathbf{F}_H(p\boldsymbol{\theta}^{(0,0)})$ is, simply, $\mathcal{T}\widetilde{L}\mathcal{T}^{-1}$.
}
\end{remark}

In the rest of this paper, we use the basis of $\mathbf{F}_H$ as the foundation for local Fourier analysis on the $p\times p$ periodic structures of the BDDC operators. The ``sparse'' (or ``pointwise'') nature of the basis in $\mathbf{F}_H$ allows a natural expression of the operators in BDDC and, as such, is more convenient than the equivalent ``global'' basis in $\mathbf{E}_h$.

Note that the presentation above assumes that the original Fourier space, $\mathbf{E}_h$, is considered with harmonic frequencies in domain $[-\frac{\pi}{p},\frac{(2p-1)\pi)}{p})^2$, and the sparse basis in $\mathbf{F}_H$ considers a single mode, $\boldsymbol{\theta}\in[-\pi,\pi)^2$. In both cases, it is clear that any frequency set covering an interval of length $2\pi$ in both $x$ and $y$ components can be used instead.

\begin{remark}
 The same basis has been used to analyse many other problems, particularly in the case of $p=2$, where it  can be used for red-black Gauss-Seidel relaxation \cite{kuo1989two,MR1807961}. Recent works \cite{bolten2018fourier,kumar2018cell} make use of more general structure similar to what is needed here.
\end{remark}

\subsection{Representation of the Original Problem}\label{sec-ReOP}
On $\mathbf{G}_h$, we call each node, $(\mathcal{I},\mathcal{J})$, where mod$(\mathcal{I},p)=0$ and mod$(\mathcal{J},p)=0$ a coarse-level point index.  We construct a collective grid set associated with $(\mathcal{I},\mathcal{J})$ for each subdomain as
\begin{equation}\label{def-SIJ}
\mathfrak{S}_{\mathcal{I},\mathcal{J}}=\big\{\boldsymbol{ x}_{(\mathcal{I}+k,\mathcal{J}+\ell)}: k, \ell=0,1,\cdots,p-1\big\}.
\end{equation}
In this way, we split the infinite grid $\mathbf{G}_h$ into  $p \times p$ subgrids which coincide with the nonoverlapping partition. Each subdomain can, in fact, be treated as a representative of the overall periodic structure. Similarly, we can define a block stencil on each subdomain. Thus, the degrees of freedom in $A$ can be naturally divided into subsets, $\mathfrak{S}_{\mathcal{I},\mathcal{J}}$, whose union provides a disjoint cover for the set of degrees of freedom on the infinite mesh $\mathbf{G}_{h}$. This division leads naturally to the block operator structure needed for LFA. Throughout the rest of this paper, the index $(\mathcal{I},\mathcal{J})$ corresponds to the coarse point at the lower-left corner of the subdomain under consideration, unless stated otherwise.  The left of Figure \ref{p4-hatA} shows the meshpoints for this decomposition for $p=4$.
 \begin{figure}
\centering
\includegraphics[width=6.25cm,height=5.5cm]{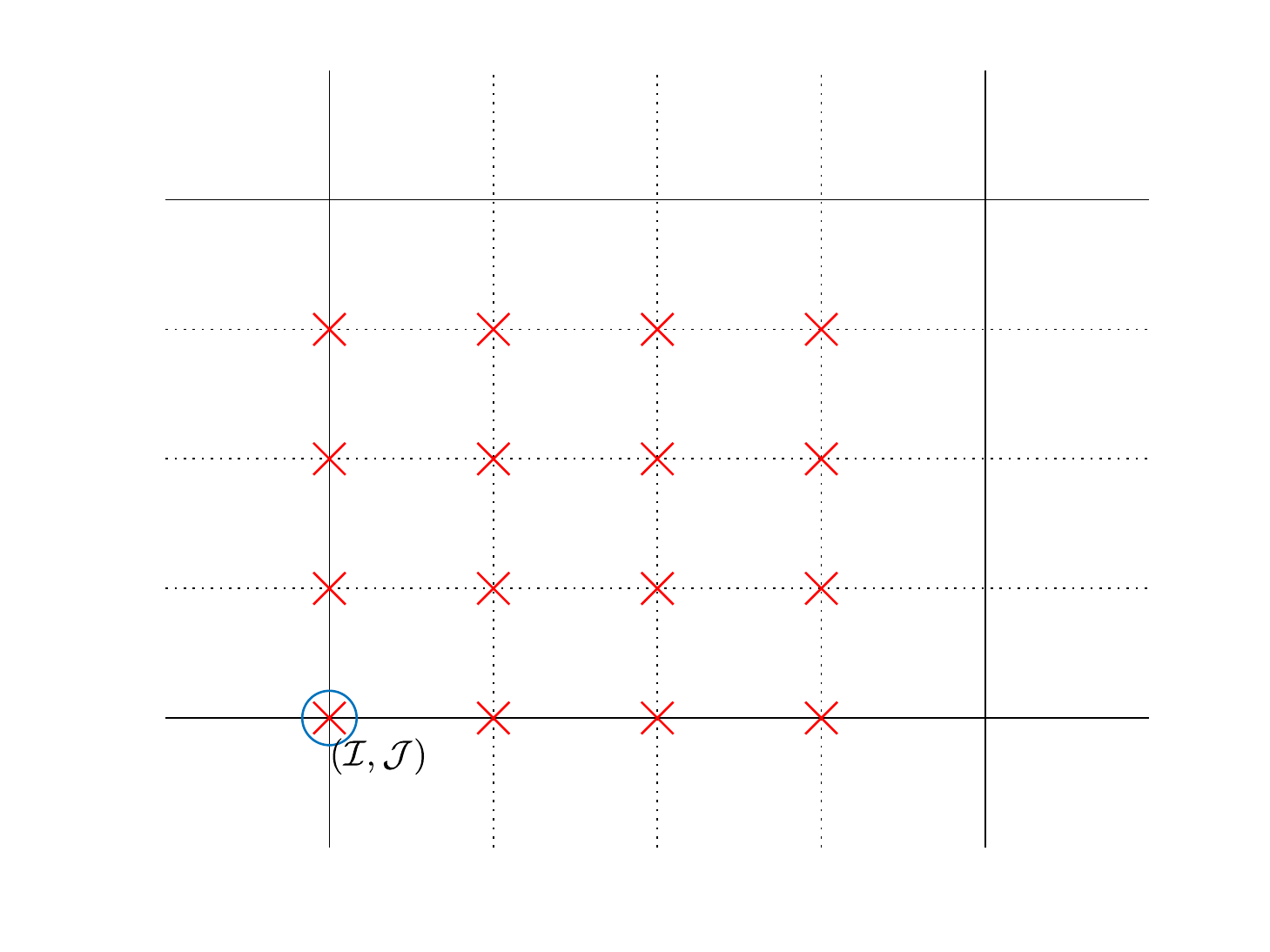}
\includegraphics[width=6.25cm,height=5.5cm]{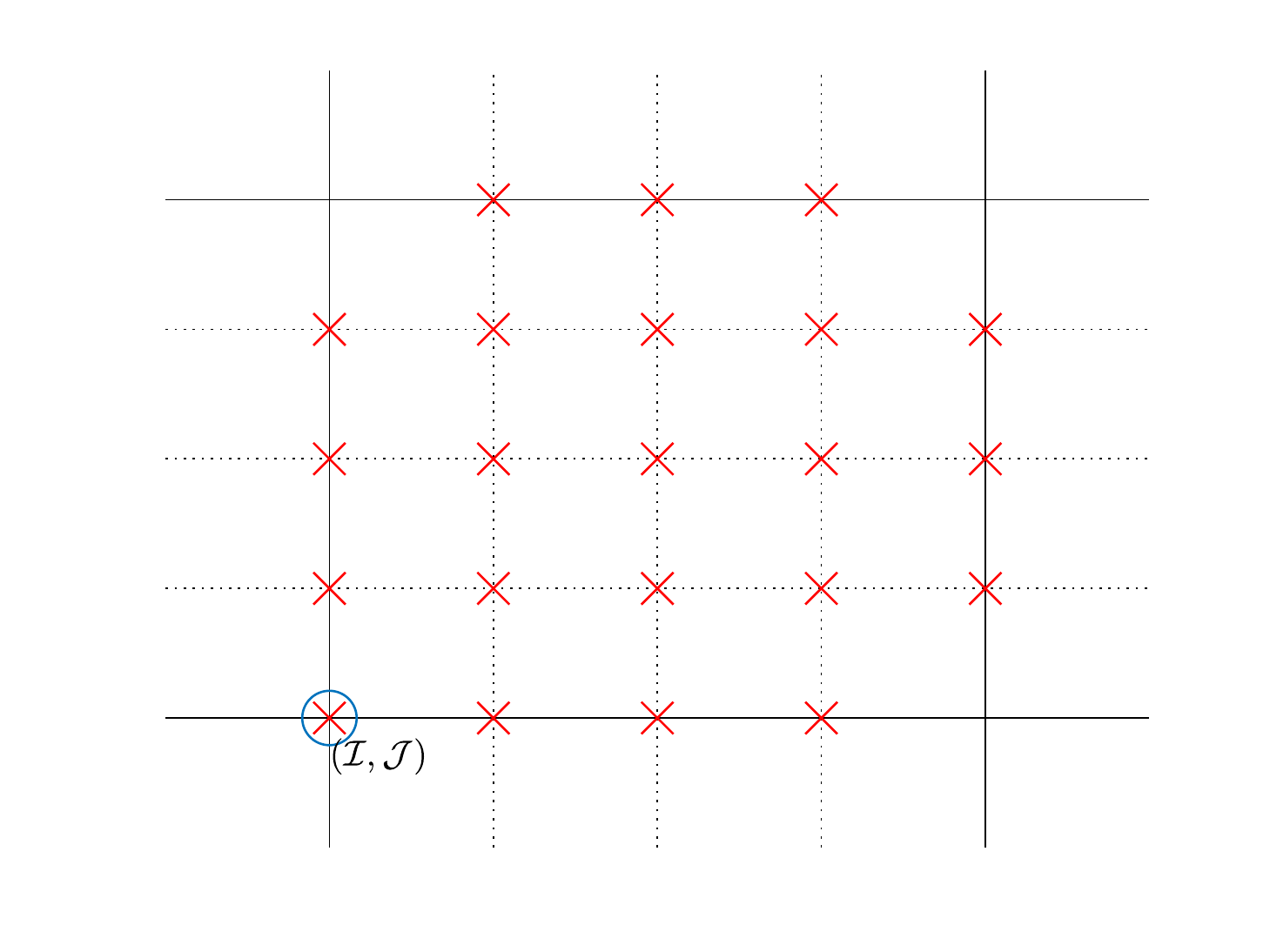}
\caption{At left, the location of degrees of freedom in $\mathfrak{S}_{\mathcal{I},\mathcal{J}}$ defined in Equation (\ref{def-SIJ}) for one subdomain with $p=4$. At right, the location of degrees of freedom in $\mathfrak{S}^{*}_{\mathcal{I},\mathcal{J}}$ defined in Equation (\ref{def-SIJ-star}) for one subdomain with $p=4$.} \label{p4-hatA}
\end{figure}

For each $\mathfrak{S}_{\mathcal{I},\mathcal{J}}$, we use a row-wise ordering of
the grid points (lexicographical ordering). This will fix the ordering of the symbols in the following; for any other ordering, a  permutation operator would need to be applied. In the following, we do not show the specific position of each element in a  vector or matrix, and they are assumed to be consistent with the ordering of the grid points. Based on the set  $\mathfrak{S}_{\mathcal{I},\mathcal{J}}$, we define the $p^2$-dimensional space
\begin{equation}\label{basis-subdomain}
   \mathcal{E}(\boldsymbol{\theta})=\text{span}\big\{\boldsymbol{ \varphi}_{k,\ell}(\boldsymbol{\theta}):   k, \ell =0,1,\cdots, p-1 \big\},
\end{equation}
where $\boldsymbol{ \varphi}_{k,\ell}(\boldsymbol{\theta})=\big(\varphi_{k,\ell}(\boldsymbol{\theta},\boldsymbol{x}_{\mathcal{I}+s,\mathcal{J}+t})\big)_{s,t=0}^{p-1}$ is a $p^2\times 1$ vector with only one nonzero element,  defined in (\ref{FG-Sparse-Basis}), in the position corresponding to $(\mathcal{I}+k,\mathcal{J}+\ell)$. For both $\mathcal{E}(\boldsymbol{\theta})$ and $\boldsymbol{ \varphi}_{k,\ell}(\boldsymbol{\theta})$, we have simply taken the infinite mesh representation of $\mathbf{F}_H$ and truncated it to a single $p\times p$ block of the mesh, which is sufficient to define the symbol of $A$ in this basis.

Note that each  subdomain contains $p^2$ degrees of freedom, and that the corresponding symbol is not a scalar due to the definition of the Fourier basis in (\ref{basis-subdomain}). We treat the block symbol as a system,  presented as a $p^2\times p^2$ matrix, \revise{noting the relation between symbols presented in the classical Fourier basis and those in the basis of (\ref{basis-subdomain}) given  in Remark \ref{remark-on-modified-basis}. To express the symbol of $A$ in terms of this basis, we first find its representation with respect to the classical Fourier modes, then  use the similarity transformation with $\mathcal{T}$ to change basis.   Note that the stencil of the $Q_1$ discretization of the Laplacian is
\begin{equation*}
  L_h = \frac{1}{3}\begin{bmatrix}
  -1 & -1 & -1 \\
  -1 &  8 & -1\\
  -1 & -1& -1
  \end{bmatrix}.
\end{equation*}
 From Definition \ref{formulation-symbol}, the classical symbol of $L_h$ is $\widetilde{L}_h(\boldsymbol{\theta})=\frac{2}{3}\big(4-\cos(\theta_1)-\cos(\theta_2)-2\cos(\theta_1)\cos(\theta_2)\big)$. Thus, the representation of $L_h$ with respect to the traditional basis  in $\mathbf{E}_h$ is a diagonal matrix, denoted as $\widetilde{L}_{p\times p}$, whose diagonal elements are $\widetilde{L}_h(\boldsymbol{\theta}^{(q,r)})$ with $q,r =0,\cdots,p-1$. Thus, the  Fourier representation of  the Laplacian  on $\mathfrak{S}_{\mathcal{I},\mathcal{J}}$  in terms of the basis of $\mathbf{F}_H(p\boldsymbol{\theta}^{(0,0)})$ is
\begin{equation}\label{onesubdomain-symbol-modifid-basis}
\widetilde{A}(p\boldsymbol{\theta}^{(0,0)})= \mathcal{T} \widetilde{L}_{p \times p}\mathcal{T}^{-1}.
\end{equation}
\begin{remark}
We emphasize that in (\ref{onesubdomain-symbol-modifid-basis}), $\widetilde{A}(\boldsymbol{\theta})= \widetilde{A}(p \boldsymbol{\theta}^{(0,0)})$ is a function of $p\boldsymbol{\theta}^{(0,0)}$, where $\boldsymbol{\theta}^{(0,0)} =(\theta_1^{(0)},\theta_2^{(0)})\in T^{\rm low}$.  However, in $\widetilde{L}_{p\times p}$, whose diagonal elements are $\widetilde{L}_h(\boldsymbol{\theta}^{(q,r)})$, we have $\boldsymbol{\theta}^{(q,r)} =\big(\theta_1^{(0)}+\frac{2\pi q}{p}, \theta_2^{(0)}+\frac{2\pi r}{p}\big)\in \big[-\frac{\pi}{p},\frac{(2p-1)\pi}{p}\big)^2$.
\end{remark}
}
The symbol $\widetilde{A}(\boldsymbol{\theta})$ acts  on a vector, $\boldsymbol{\alpha}$, that defines a function in $\mathcal{E}(\boldsymbol{\theta})$  by giving the coefficients of its expansion in terms of the Fourier basis functions. Considering a point in $\mathfrak{S}_{\mathcal{I},\mathcal{J}}$, if the values of a function at neighbouring points are expressed by $\alpha_{k,\ell}\boldsymbol{\varphi}_{k,\ell}$, the entries in $\widetilde{A}(\boldsymbol{\theta})\boldsymbol{\alpha}$ give the coefficients of the Fourier expansion of the function defined by the original operator $A$ on $\mathbf{G}_h$ acting on this function in $\mathcal{E}(\boldsymbol{\theta})$. We note that a similar approach was employed for LFA for vector finite-element discretizations in \cite{maclachlan2011local}.

\begin{remark}
\revise{Note that we can easily use the standard Fourier space, $\mathbf{E}_h$, for the Fourier representation of the Laplacian on $\mathfrak{S}_{\mathcal{I},\mathcal{J}}$. However, the symbols of the preconditioners, $M_i$, are more naturally expressed in terms of the basis in $\mathbf{F}_H$.}
\end{remark}

\subsection{Representation of Preconditioned Operators}\label{sec-RePO}
Now we turn to calculating the Fourier representations of the preconditioners, $M_1^{-1}$ and $M_2^{-1}$. Recall (\ref{partial-space}) and the partially broken decomposition at the left of Figure \ref{p4-A}, where the two DOFs at the boundary of each subdomain are duplicated in the partially broken space (except for the ``coarse'' vertices). When we consider the representation of $\hat{A}$, (\ref{Partial-Subprob}), we must account for this duplication. It is natural to define a bigger collection of DOFs to represent the symbol of this block stencil, compared with the representation of $A$.

First, we define a collective grid set associated with $(\mathcal{I},\mathcal{J})$ for the partially subassembled problem for each subdomain as
\begin{equation}\label{def-SIJ-star}
\mathfrak{S}^{*}_{\mathcal{I},\mathcal{J}}=\{\boldsymbol{ x}_{(\mathcal{I}+k,\mathcal{J}+\ell)}: k, \ell=0,1,\cdots,p\}\setminus \{\boldsymbol{ x}_{(\mathcal{I}+p,\mathcal{J})},\boldsymbol{ x}_{(\mathcal{I},\mathcal{J}+p)}, \boldsymbol{ x}_{(\mathcal{I}+p,\mathcal{J}+p)} \},
\end{equation}
see the right of Figure \ref{p4-hatA}.

Now, we can consider the stencil of $\hat{A}$ acting on one subdomain, $\mathfrak{S}^{*}_{\mathcal{I},\mathcal{J}}$. Let $\hat{A}^{(\mathcal{I},\mathcal{J})}$ be a $(p+1)^2\times (p+1)^2$ matrix, which is the partially subassembled problem on one subdomain including its four neighbouring coarse-grid degrees of freedom, as
\begin{equation}\label{partial-one-element-form}
  \hat{A}^{(\mathcal{I},\mathcal{J})}  =\begin{pmatrix}
       A_{rr}^{(\mathcal{I},\mathcal{J})} & \left(\hat{A}_{ \Pi r}^{(\mathcal{I},\mathcal{J})}\right)^{T} \\
      \hat{A}_{ \Pi r}^{(\mathcal{I},\mathcal{J})} & A_{ \Pi \Pi}^{(\mathcal{I},\mathcal{J})}
    \end{pmatrix}=\begin{pmatrix}
      A_{rr}^{(\mathcal{I},\mathcal{J})} & 0\\
     \hat{A}_{ \Pi r}^{(\mathcal{I},\mathcal{J})} & \hat{S}_{\Pi}^{(\mathcal{I},\mathcal{J})}
    \end{pmatrix}
     \begin{pmatrix}
      I & \left(A_{rr}^{(\mathcal{I},\mathcal{J})}\right)^{-1}\left(\hat{A}_{\Pi r}^{(\mathcal{I},\mathcal{J})}\right)^{T}\\
    0 & I
    \end{pmatrix},
\end{equation}
where $A_{rr}^{(\mathcal{I},\mathcal{J})}$ is a $\big((p+1)^2-4\big)\times \big((p+1)^2-4\big)$ matrix corresponding to the interior and interface degrees of freedom on the subdomain and  $A_{ \Pi \Pi}^{(\mathcal{I},\mathcal{J})}$ corresponds to the four coarse-level variables on one subdomain. Note that $A_{ \Pi \Pi}^{(\mathcal{I},\mathcal{J})}=\frac{2}{3}I$ and  $\hat{S}_{\Pi}^{(\mathcal{I},\mathcal{J})}=A_{ \Pi \Pi}^{(\mathcal{I},\mathcal{J})}-\hat{A}_{\Pi r}^{(\mathcal{I},\mathcal{J})}(A_{rr}^{(\mathcal{I},\mathcal{J})})^{-1}(\hat{A}_{\Pi r}^{(\mathcal{I},\mathcal{J})})^{T}$.   We use index $(\mathcal{I},\mathcal{J})$ as a superscript in order to distinguish this as a subblock of  the matrix  in (\ref{partial-problem}), but note that it is independent of the particular subdomain, $(\mathcal{I},\mathcal{J})$, under consideration. Let  $ \widetilde{\hat{A}}$ be the Fourier representation of the global partially subassembled problem, with the corresponding symbol being a $\big((p+1)^2-3\big)\times \big((p+1)^2-3\big)$ matrix,
\begin{equation*}
  \widetilde{\hat{A}} = \begin{pmatrix}
      \widetilde{ A}_{rr} & 0\\
     \widetilde {A}_{ \Pi r} & \widetilde{S}_{\Pi}
    \end{pmatrix}
     \begin{pmatrix}
      \widetilde {I} & (\widetilde{A}_{rr})^{-1}\widetilde{A}_{\Pi r}^{T}\\
    0 & \widetilde{ I}
    \end{pmatrix}=\widetilde{\mathcal{K}}_{LD} \widetilde{\mathcal{K}}_{U},
\end{equation*}
where $\widetilde{A}_{rr}$ is a $\big((p+1)^2-4\big) \times \big((p+1)^2-4\big)$ Fourier representation of $A_{rr}^{(\mathcal{I},\mathcal{J})}$ computed as was done for $\widetilde{A}$ above and $ \widetilde{S}_{\Pi}$ is the Fourier representation of the global Schur complement, $\hat{S}_{\Pi}$.

In order to compute the Fourier representation of $\hat{S}_{\Pi}$, recall the global block decomposition of $\hat{A}$ in (\ref{partial-problem}) where $\hat{S}_{\Pi}=A_{ \Pi \Pi}-\hat{A}_{\Pi r}A_{rr}^{-1}\hat{A}_{\Pi r}^{T}$. To calculate  $\hat{S}_{\Pi}$, we first calculate  the restriction of $\hat{S}_{\Pi}$ on $\mathfrak{S}^{*}_{\mathcal{I},\mathcal{J}}$, then assemble this to give  the global stencil. Let $S_0=\hat{A}_{\Pi r}^{(\mathcal{I},\mathcal{J})}(A_{rr}^{(\mathcal{I},\mathcal{J})})^{-1}(\hat{A}_{\Pi r}^{(\mathcal{I},\mathcal{J})})^{T}$ be a $4\times 4$  matrix corresponding to the vertices adjacent to one subdomain, representing one macroelement of the coarse-level variables. Direct calculation shows this matrix has the same nonzero structure as the element stiffness matrix for a symmetric second-order differential operator on a uniform square mesh, with equal values for the connections from each node to itself (denoted $s_1$), its adjacent vertices ($s_2$), and its opposite corner ($s_3$). Since $\hat{S}_{\Pi}^{(\mathcal{I},\mathcal{J})}=\frac{2}{3}I-S_0$ gives the macroelement stiffness contribution, assembling the coarse-level stiffness matrix over $2\times 2$ macroelement patches yields $\widetilde{S}_{\Pi}$ as the symbol of the 9-point stencil  given by
\begin{equation*}\label{Schur-stencil}
     \begin{bmatrix}
      -s_3   &   -2s_2              & -s_3\\
     -2s_2   &  \frac{8}{3}-4s_1    & -2s_2  \\
     -s_3    &   -2s_2              &-s_3
    \end{bmatrix},
\end{equation*}
acting on the coarse points.

The final term needed for the symbol of $\hat{A}$ is $\widetilde{A}_{\Pi r}$, the representation of the contribution from interior and interface degrees of freedom to the coarse degrees of  freedom, which has only 12-nonzero elements per subdomain, with 3 contributing to each corner of the subdomain. We take the coarse-level point $\boldsymbol{x}_{I,J}$ as an example. At the right of Figure \ref{p4-hatA}, $\boldsymbol{x}_{\mathcal{I},\mathcal{J}}$ obtains contributions from  the points $\boldsymbol{x}_{\mathcal{I}+1,\mathcal{J}}, \boldsymbol{x}_{\mathcal{I}+1,\mathcal{J}+1},\boldsymbol{x}_{\mathcal{I},\mathcal{J}+1}$ and the corresponding stencils are
\begin{equation*}
\begin{bmatrix}
  * & -\frac{1}{6}
\end{bmatrix}, \,
\begin{bmatrix}
   & -\frac{1}{3}\\
  *&
\end{bmatrix},\,
\begin{bmatrix}
   -\frac{1}{6}\\
   *
\end{bmatrix},
\end{equation*}
where $*$ denotes the position on the grid at which the discrete operator is applied, namely
$\boldsymbol{x}_{\mathcal{I},\mathcal{J}}$. The symbols of these three stencils are given by $-\frac{1}{6}e^{\iota \theta_1/p}, -\frac{1}{3}e^{\iota(\theta_1+\theta_2)/p},$ $-\frac{1}{6}e^{\iota\theta_2/p}$, respectively.
Since $\boldsymbol{x}_{\mathcal{I},\mathcal{J}}$ is adjacent to three other subdomains, the coarse degree of freedom at $\boldsymbol{x}_{\mathcal{I},\mathcal{J}}$ also obtains  contributions from those subdomains,  and the other 9 contributing stencils are computed similarly.

\begin{remark}
\revise{In essence, the symbol of $\hat{A}$ is determined by considering the action of $\hat{A}$ on a function in $\mathbf{F}_H$ that has been mapped into the partially subassembled space by $\mathcal{R}_i$. Such functions have natural periodicity expressed over $\mathfrak{S}_{\mathcal{I},\mathcal{J}}^{*}$, and the only challenge in expressing the symbol of $\hat{A}$ comes from assembling the Schur complement and connections to the ``corner'' (coarse level) DOF, as described above.
}
\end{remark}

For the stencil of $M_1^{-1}$, we need the representation of $\mathcal{R}_1$. Recall that $\mathcal{R}_1$ is a scaling operator, where each column of $\mathcal{R}_1$ corresponding to a degree of freedom of the global problem in the interiors and at the coarse-grid points has a single nonzero entry with value 1, and each column of $\mathcal{R}_1$ corresponding to an interface degree
of freedom has two nonzero entries, each with value $\frac{1}{2}$. Since we consider periodic Fourier modes on each subdomain, the interface degrees of freedom share the same values scaled by an exponential shift. For example, at the left of Figure \ref{p4-hatA}, the degrees of freedom located at the left boundary and the right boundary have the same coefficient of the (shifted) exponential, as do the degrees of freedom located at the  bottom and top. Thus, $\mathcal{R}_1$ is its own Fourier representation, since the neighborhoods do not contribute to each other. Note that $\mathcal{R}_1$ maps the $p^2$-dimensional Fourier basis from $\mathcal{E}(\boldsymbol{\theta})$, used to express $\widetilde{A}(\boldsymbol{\theta})$ onto a $(p+1)^2-3$ dimensional space with similar sparse basis on $\mathfrak{S}^{*}_{\mathcal{I},\mathcal{J}}$ that is used above to express  the symbol of $\hat{A}$ and its inverse.

Finally, the representation of $M_1^{-1}A$ is given by
\begin{equation*}\label{Symbol-invMa}
\widetilde{G}_{1,0}(\boldsymbol{\theta})=\widetilde{ \mathcal{R}}_{1}^{T}(\widetilde{{\hat{A}}})^{-1}\widetilde{\mathcal{R}}_1\widetilde{A}=\widetilde{ \mathcal{R}}_{1}^{T}\widetilde{\mathcal{K}}_{U}^{-1}\widetilde{\mathcal{K}}_{LD}^{-1}\widetilde{\mathcal{R}}_1\widetilde{A}.
\end{equation*}

For the Dirichlet preconditioner in (\ref{M2-Pre}), we also need to know the LFA representation of the operators $J_D$ and $\mathcal{H}$. Since $J_D$ is a pointwise scaling operator, its symbol in the pointwise basis of $\mathbf{F}_H$ is itself.  According to the definition of $\mathcal{H}$, the symbol of $\mathcal{H}$ is given by $\widetilde{\mathcal{H}}=\widetilde{ A}_{rr,{\rm I}}^{-1}\widetilde{A}_{\Gamma,{\rm I}}^{T}$, where $\widetilde{ A}_{rr,{\rm I}}$ is the submatrix of  $\widetilde{ A}_{rr}$ corresponding to the interior degrees of freedom, and  $\widetilde{A}_{\Gamma,{\rm I}}^{T}$ is the submatrix of $\widetilde{\hat{A}}$ corresponding to the contribution of the interface degrees of freedom to the interior degrees of freedom. Both of these are computed in a similar manner to $\widetilde{A}$ and $\widetilde{\hat{A}}$ as described above. Thus, the LFA representation of $M_2^{-1}A$  can be written as
\begin{equation*}\label{invM2A-LFA}
  \widetilde{G}_{2,0}(\boldsymbol{\theta})=(\widetilde{\mathcal{R}}_1^{T}-\widetilde{\mathcal{H}}\widetilde{J}_D)\widetilde{\mathcal{K}}_{U}^{-1}\widetilde{\mathcal{K}}_{LD}^{-1}
  (\widetilde{\mathcal{R}}_1-\widetilde{J}_D^{T}\widetilde{\mathcal{H}}^{T})\widetilde{A}.
\end{equation*}

The details of the 3-level variants of LFA are similar to those given above. We now consider a segment of the infinite mesh given, on the fine level, by a $p\times p$ array of subdomains, with each subdomain of size $p\times p$ elements.  On the first coarse level (corresponding to the Schur complement $\hat{S}_{\Pi}$ in (\ref{partial-problem})), we then consider a single $p\times p$ subdomain of the infinite coarse mesh, and apply the same technique recursively. To accommodate this, we adapt the fine-level Fourier modes to be
$\varphi^{*}(\boldsymbol{\theta},\boldsymbol{x}):=e^{\iota\boldsymbol{\theta}\cdot\boldsymbol{x}/H'}$, where $H'=p^2h$. The coarse-level Fourier modes are then the same as (\ref{basis-subdomain}). Thus, $\widetilde{G_{i,j}}(\boldsymbol{\theta})$ is a $p^4\times p^4$ matrix for the three-level variants.

\begin{remark}
\revise{For practical use of this LFA for BDDC preconditioners of other discretizations or PDEs, we first need to represent the symbol of $A$ in the basis for $\mathbf{F}_H$ over one subdomain, similarly to (\ref{onesubdomain-symbol-modifid-basis}). Then, we need the Fourier representation of the preconditioner on $\mathfrak{S}^{*}_{\mathcal{I},\mathcal{J}}$, where $\mathcal{R}_i$ determines how the Fourier coefficients from $\mathbf{F}_H$  map onto $\mathfrak{S}^{*}_{\mathcal{I},\mathcal{J}}$. Symbols for operators on the partially subassembled space can be calculated as described here for the $Q_1$ Laplacian, along with those of the chosen restriction operator.
}
\end{remark}
\section{Numerical Results}\label{sec:Numer}

\subsection{Condition Numbers of Two-level Variants}
In the LFA  setting, $\boldsymbol{\theta}=(\theta_1,\theta_2)\in[-\pi,
  \pi)^2$. Here we take $d\theta = \pi/n$ as the discrete stepsize and
  sample the Fourier space at $2n$ evenly distributed frequencies in
  $\theta_1$ and $\theta_2$ with offset $\pm d\theta/2$ from
  $\theta_1=\theta_2=0$ to avoid the singularity at zero
  frequency. For each frequency on the mesh, we compute the
  eigenvalues of the two-level operators, and  define
  $\kappa:=\frac{\lambda_{\rm max}}{\lambda_{\rm min}}$, where
  $\lambda_{\rm min}$  and $\lambda_{\rm max}$ are the smallest and
  biggest eigenvalues over all frequencies.  We note that, as proven
  above, the eigenvalues in this setting are always real; moreover, we
  consider only choices of the relaxation parameter, $\omega$, such
  that the eigenvalues are also always positive, so this condition
  number makes sense as a proxy for how ``well preconditioned'' the
  linear system is.

Table \ref{Cond-2-level-G10} shows the condition numbers for the
two-level preconditioners with variation in both subdomain size, $p$,
and sampling frequency, $n$. For comparison, we include a row labelled
PCBDDC of condition numbers estimated by applying the PCBDDC algorithm
from PETSc, as described in Section \ref{sec:validation}.  When $n=2$, the condition number prediction is notably inaccurate, but we obtain a  consistent prediction for $n\geq4$ (and very consistent for $n\geq 8$). For $\widetilde{G}_{1,0}$, the condition number increases quickly with $p$ as expected. Compared with $\widetilde{ G}_{1,0}$, $\widetilde{ G}_{2,0}$ has a much smaller condition number that grows more slowly with $p$. For $\widetilde{G}_{1,0}$, we know there exists $\mathfrak{C}_{1,0}$ such that the true condition number of the preconditioned system (on a finite grid) is bounded by $\mathfrak{C}_{1,0}\frac{H}{h}(1+{\rm log}\frac{H}{h})$ \cite{li2007use}; from this data, we see  that our LFA prediction is consistent with this, with constant $\mathfrak{C}_{1,0}\approx 0.6$. For $\widetilde{G}_{2,0}$, we know there exists $\mathfrak{C}_{2,0}$ such that the true condition number of the preconditioned system (on a finite grid) is bounded by $\mathfrak{C}_{2,0}(1+{\rm log}\frac{H}{h})^2$ \cite{li2007use}; from this data, again we see  that our LFA prediction is consistent with this, with constant $\mathfrak{C}_{2,0}\approx 0.4$.

\begin{table}
\centering
\caption{LFA-predicted condition numbers of two-level preconditioners as
  a function of subdomain size, $p$, and sampling frequency, $n$.  For
  comparison, we include numerical estimates using the procedure
  described in Section \ref{sec:validation}, labelled PCBDDC.}
\begin{tabular}{l||c|c|c|c||c|c|c|c}

     &\multicolumn{3} {c} {$\widetilde{G}_{1,0}$}  && \multicolumn{4} {c} {$\widetilde{G}_{2,0}$} \\
\hline
\backslashbox{$n$}{$p$}   &4    &8                 &16            &32    &4           &8         &16                &32  \\
\hline
$2$                    &4.14      &11.11         &27.95            &67.55      &2.23                  &3.02      &3.94            &5.01        \\

$4$                    &4.36       &11.94         &30.27            &73.44       &2.32                   &3.15      &4.13            &5.26     \\

$8$                    &4.42       &12.18          &30.94            &75.16     &2.34                   &3.19      &4.17            &5.32      \\

$16$                   &4.44      &12.25         &31.12             &75.61           &2.35                   &3.19      &4.19            &5.33       \\
$32$                   &4.44        &12.26         &31.16            &75.72      &2.35            &3.20      &4.19            &5.34          \\

$64$                   &4.44       &12.27        &31.17            &75.75    &2.35                  &3.20      &4.19            &5.34    \\

$128$                  & 4.44       &12.27         &31.18           &75.76        &2.35                   &3.20      &4.19            &5.34  \\
\hline
PCBDDC                 & 4.44 & 12.27 & 31.18 & 75.76 & 2.34 & 3.18 & 4.17 & 5.31 \\
\hline
$\mathfrak{C}_{i,0}(n=32)$   &0.47     &0.50          &0.52             &0.53    &0.41           & 0.34     &0.29            &0.27 \\
\hline
\end{tabular}\label{Cond-2-level-G10}
\end{table}

Optimizing the weight parameters for $\widetilde{G}_{1,0}^{f}$  and $\widetilde{G}_{2,0}^{f}$  by systematic search with different $n$ and $p$, we see that the optimal parameter $\omega$ is dependent on $p$, but largely independent of $n$.  Table \ref{Cond-2-level-G10f} shows that significant improvement can be had for the $M_1$ preconditioner, but not for $M_2$, see Table \ref{Cond-2-level-G20f}. We again see small $n$ (e.g., $n=4$ or 8) is enough to obtain a consistent prediction for these condition numbers.



\begin{table}
\centering
\caption{Condition numbers for two-level lumped preconditioner with fine-grid multiplicative combination with diagonal scaling,   $\widetilde{G}_{1,0}^{f}$. In brackets, value of weight parameter, $\omega$, that minimizes condition number.}
\begin{tabular}{l||c|c|c|c}
\hline
\hline
\backslashbox{$n$}{$p$}   &4     &8      &16       &32 \\
\hline
\hline

$2$  &2.06(2.1)     &3.18(2.3)   &5.43(2.5)   &9.71(2.6)       \\
$4$  &2.17(1.5)     &3.29(2.3)    &5.64(2.5)   &9.99(2.6)  \\
$8$  &2.18(1.4)     &3.32(2.3)   &5.70(2.5)    &10.08(2.6)       \\

$16$  &2.18(1.4)    &3.32(2.3)     &5.72(2.5)  &10.10(2.6)   \\

$32$   &2.18(1.4)   &3.33(2.3)    &5.72(2.5)   &10.10(2.6)    \\

$64$   &2.18(1.4)   &3.33(2.3)    &5.72(2.5)   &10.10(2.6)     \\

\hline
\hline
\end{tabular}\label{Cond-2-level-G10f}
\end{table}

\begin{table}
\centering
\caption{Condition numbers for two-level Dirichlet preconditioner with fine-grid multiplicative combination with diagonal scaling, $\widetilde{G}_{2,0}^{f}$. In brackets, value of weight parameter, $\omega$, that minimizes condition number.}
\begin{tabular}{l||c|c|c|c}
\hline
\hline
\backslashbox{$n$}{$p$}   &4                         &8                 &16               &32 \\
\hline
\hline

$2$                    &1.82(2.2)                 &2.36(1.7)              &3.12(2.0)         &4.20(1.8)     \\
$4$                    &2.03(1.1)                 &2.54(1.6)              &3.33(2.0)         &4.44(1.8)     \\

$8$                    &2.07(1.1)                 &2.59(1.6)              &3.39(2.0)         &4.50(1.8)     \\

$16$                   &2.08(1.1)                 &2.60(1.6)              &3.40(2.0)         &4.52(1.8)     \\

$32$                   &2.08(1.1)                 &2.60(1.6)              &3.40(2.0)         &4.52(1.8)     \\

$64$                   &2.08(1.1)                 &2.61(1.6)              &3.40(2.0)         &4.52(1.8)     \\

\hline
\hline
\end{tabular}\label{Cond-2-level-G20f}
\end{table}

In order to see the sensitivity of performance to parameter choice, we  consider the condition numbers for the two-level lumped  and Dirichlet preconditioners in multiplicative combination with diagonal scaling on the fine grid with $p=4,8,16$ and $n=32$, as a function of $\omega$, in Figure \ref{two-grid-sensitivity}. We see that the condition number of $\widetilde G_{1,0}^{f}$ shows strong sensitivity to small values of $\omega$. For $\widetilde G_{2,0}^{f}$, however,  many allowable parameters obtain a good condition number.

 \begin{figure}
\centering
\includegraphics[width=.8\textwidth]{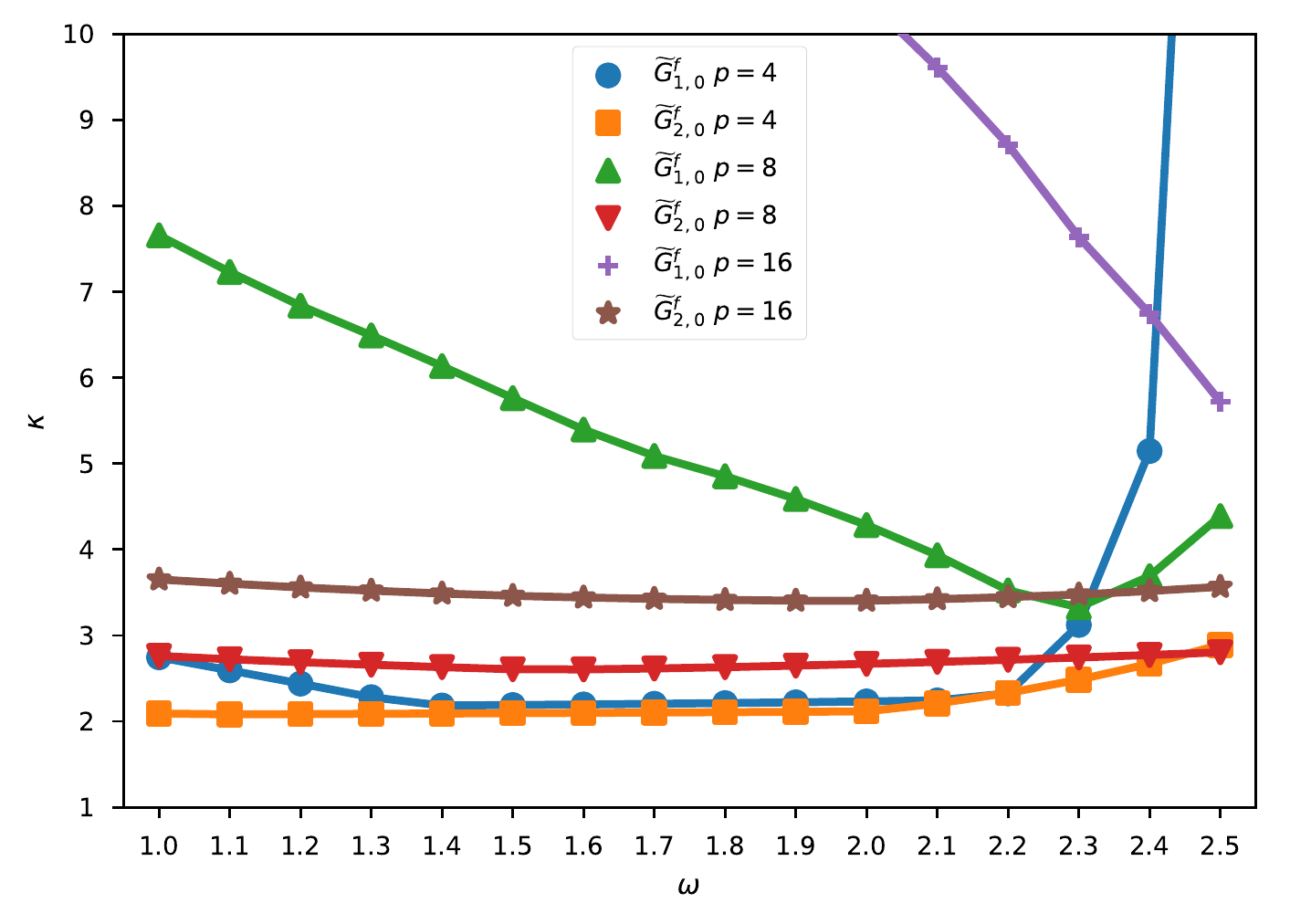}
\caption{Condition numbers of LFA symbols for two-level lumped and Dirichlet preconditioners in multiplicative combination with diagonal scaling on the fine grid with $p\in \{4, 8, 16\}$ and $n=32$, as a function of relaxation parameter, $\omega$.} \label{two-grid-sensitivity}
\end{figure}


 \subsection{Numerical Validation}\label{sec:validation}
{
For validation, numerical results were obtained using PETSc~\cite{petsc-user-ref} version 3.10's PCBDDC~\cite{zampini2016pcbddc} implementation, modified to support lumped variants (these modifications will be available in a future release).  Multiplicative relaxation was performed using the ``composite'' preconditioner type with Richardson/Jacobi.
The example \texttt{src/ksp/ksp/examples/tutorials/ex71.c} was used with periodic boundary conditions and $16\times 16$ subdomains each of size $p\times p$ with periodic boundary conditions.
Use of periodic boundary conditions is significant in that over-relaxation (large $\omega$) requires special treatment at boundaries.
The singular value decomposition was used for the coarse solver as a reliable method for handling the null space of constants, though many other approaches, such as factorization with shifting, can be used in practice and deliver equivalent results.
Eigenvalues of the preconditioned operator were estimated using the Hessenberg matrix computed by solving $A x = 0$ using GMRES with random zero-mean initial guess, no restarts, and modified Gram-Schmidt, converged to a relative tolerance of $10^{-12}$ or 50 iterations, whichever comes first.
Figure \ref{fig:bddc-eig} reports the ratio $\kappa = \lambda_{\max}/\lambda_{\min}$ as a function of parameter $\omega$ for the two-level preconditioned operators with multiplicative Richardson/Jacobi relaxation, $G_{1,0}^f$ and $G_{2,0}^f$ as defined in Equation (\ref{Multi-fine-level}).
The zero eigenvalue resulting from periodic boundary conditions, if
identified by this procedure, was ignored.  In Figure
\ref{fig:bddc-eig}, the  optimal results of $G_{j,0}^f$ with $p=4$ and
$16$ match with LFA predictions in Tables \ref{Cond-2-level-G10f} and
\ref{Cond-2-level-G20f}.  Moreover, we see excellent agreement with
the LFA results presented in Figure \ref{two-grid-sensitivity} for the case of $p=4,8,16$.
\begin{figure}
\centering
\includegraphics[width=.8\textwidth]{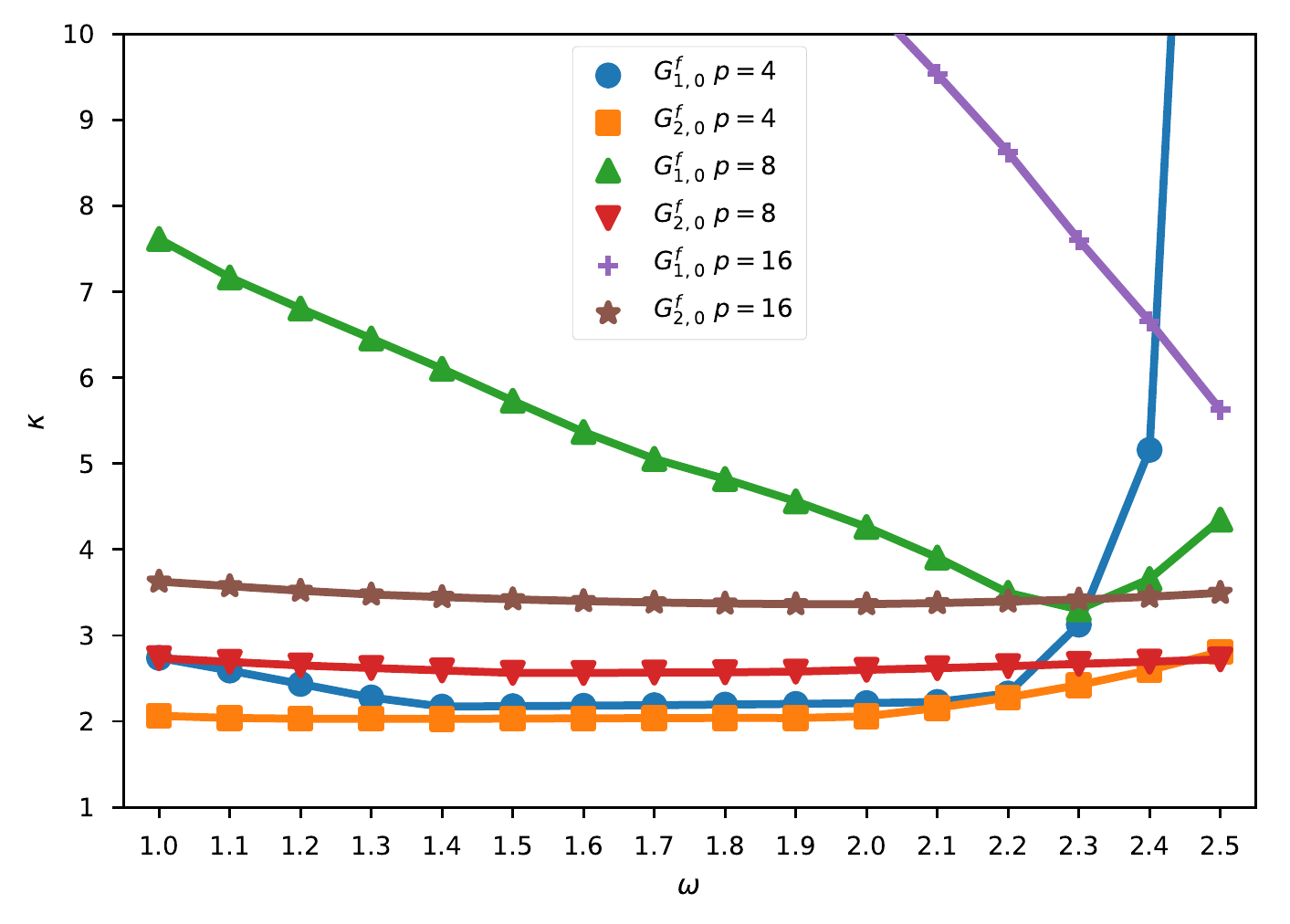}
\caption{Ratio of estimated eigenvalues $\kappa = \lambda_{\max}/\lambda_{\min}$ as a function of relaxation parameter $\omega$ for multiplicative two-level PCBDDC preconditioned operators using $16\times 16$ subdomains, each of size $p\times p$ with $p\in \{4, 8, 16\}$.  The results here are comparable to the LFA results of Figure \ref{two-grid-sensitivity}.} \label{fig:bddc-eig}
\end{figure}
}

\subsection{Eigenvalue Distribution of Two-level Variants}
\underline{}In this section, we take $n=32$, yielding $2n$ points in each dimension and $(2n)^2 = 4096$ values of $\boldsymbol{\theta}$, although similar results are seen for smaller values of $n$.  We also consider only $p=8$, although similar results are seen for other values of $p$.  For $\widetilde{G}_{1,0}^{f}$ and $\widetilde{G}_{2,0}^{f}$, we use the optimal values of $\omega$, shown in the tables above. The histograms in Figure \ref{Density-plot} show the density of eigenvalues for the two-level preconditioned operators. The y-axis is the ratio of the number of eigenvalues contained in a ``bin'' to its width, where the width of each bin  is 0.1.  For these values of $n$ and $p$, our LFA computes a total of $262144$ eigenvalues, giving $64$ eigenvalues for each of $4096$ sampling points. For all cases, the eigenvalues around 1 (represented in two bins in the histogram, covering the interval from 0.9 to 1.1) appear with dominating multiplicity, accounting for about 200,000 of the computed eigenvalues.

Note that there is a gap in the spectrum of $\widetilde{G}_{1,0}$ that increases in size with $p$ (not shown here).  A notable difference between $\widetilde{G}_{1,0}$ and $\widetilde{G}_{2,0}$ is that, while there is still a small gap in the spectrum of $\widetilde{G}_{2,0}$, it is not very prominent.  Note also that the spectra are real-valued, with only roundoff-level errors in the imaginary component.  Comparing the eigenvalues for $\widetilde{G}_{1,0}^{f}$ and $\widetilde{G}_{2,0}^{f}$ with those for $\widetilde{G}_{1,0}$ and $\widetilde{G}_{2,0}$, we see that the eigenvalues are much more tightly clustered for $\widetilde{G}_{1,0}^{f}$, but still exhibit a gap in the spectrum.  The eigenvalues of $\widetilde{G}_{2,0}^{f}$, in contrast, appear to lie in a continuous interval.  We note that little improvement is seen in the spectrum of $\tilde{G}_{2,0}^{f}$, in comparison with $\widetilde{G}_{2,0}$.  Also interesting to note is that, in contrast to all other cases, the smallest eigenvalue of $\widetilde{G}_{1,0}^{f}$ is less than 1.

\begin{remark}
As the LFA predicts both eigenvectors and eigenvalues, we can examine the frequency composition of the eigenvectors associated with these eigenvalues. The largest eigenvalue of $\widetilde{ G}_{1,0}$ is found to be dominated by oscillatory modes, but this is not true for $\widetilde{ G}_{2,0}$. This  motivates the proposed multiplicative method based on simple diagonal scaling, which is well known to effectively damp oscillatory errors in the classical multigrid setting.
\end{remark}

%

 \begin{figure}
\centering
\includegraphics[width=6.25cm,height=5.5cm]{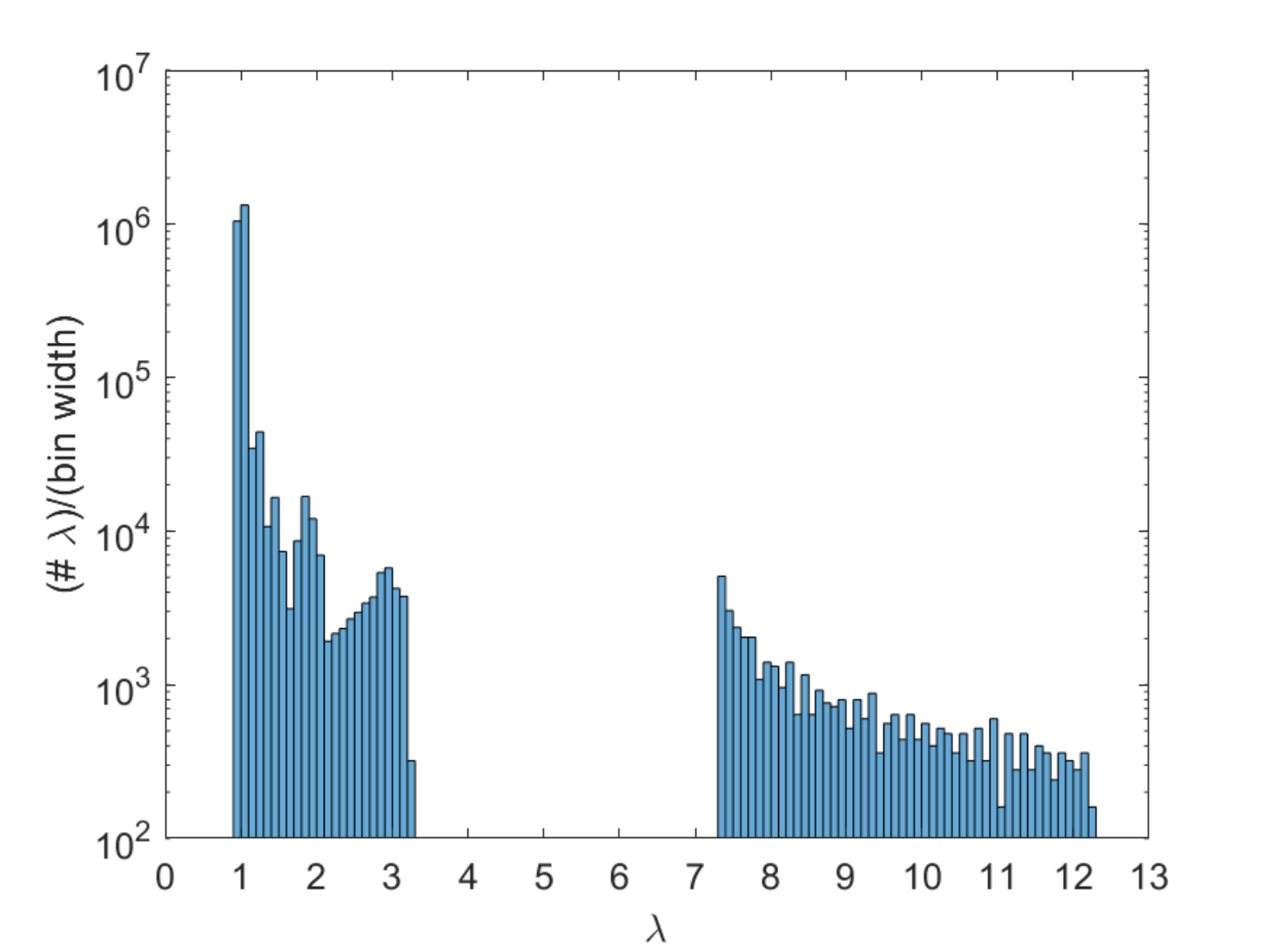}
\includegraphics[width=6.25cm,height=5.5cm]{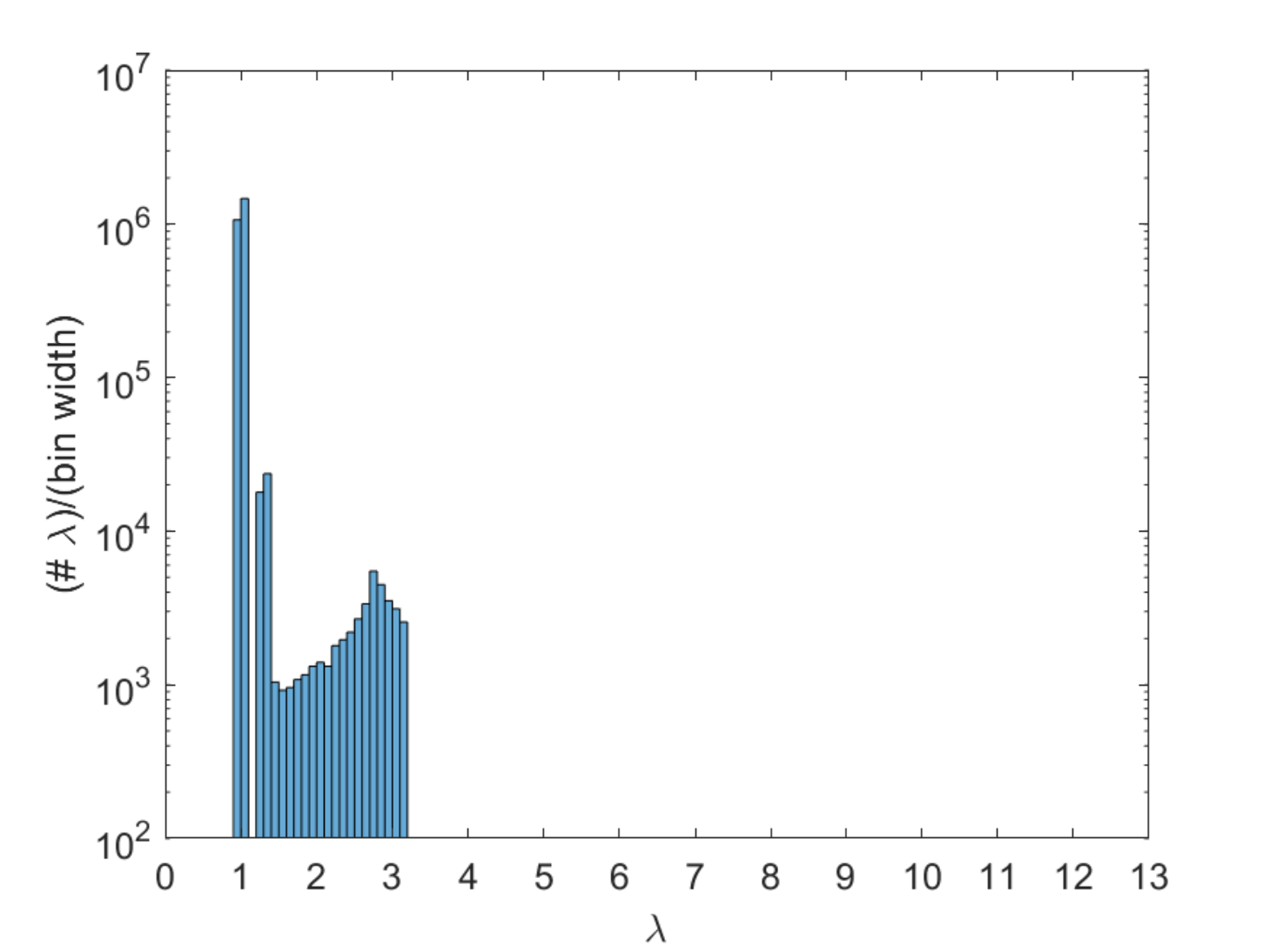}
\includegraphics[width=6.25cm,height=5.5cm]{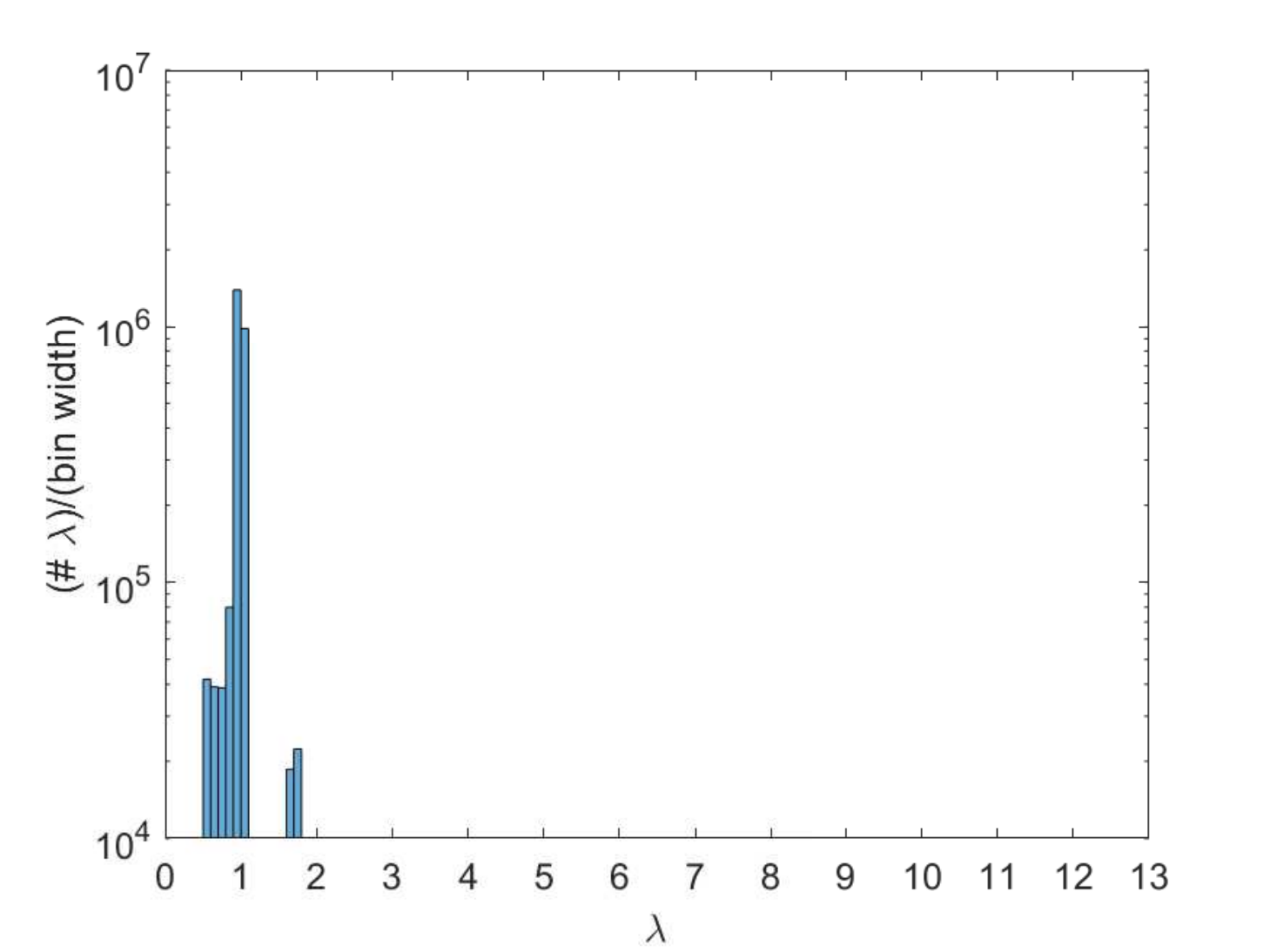}
\includegraphics[width=6.25cm,height=5.5cm]{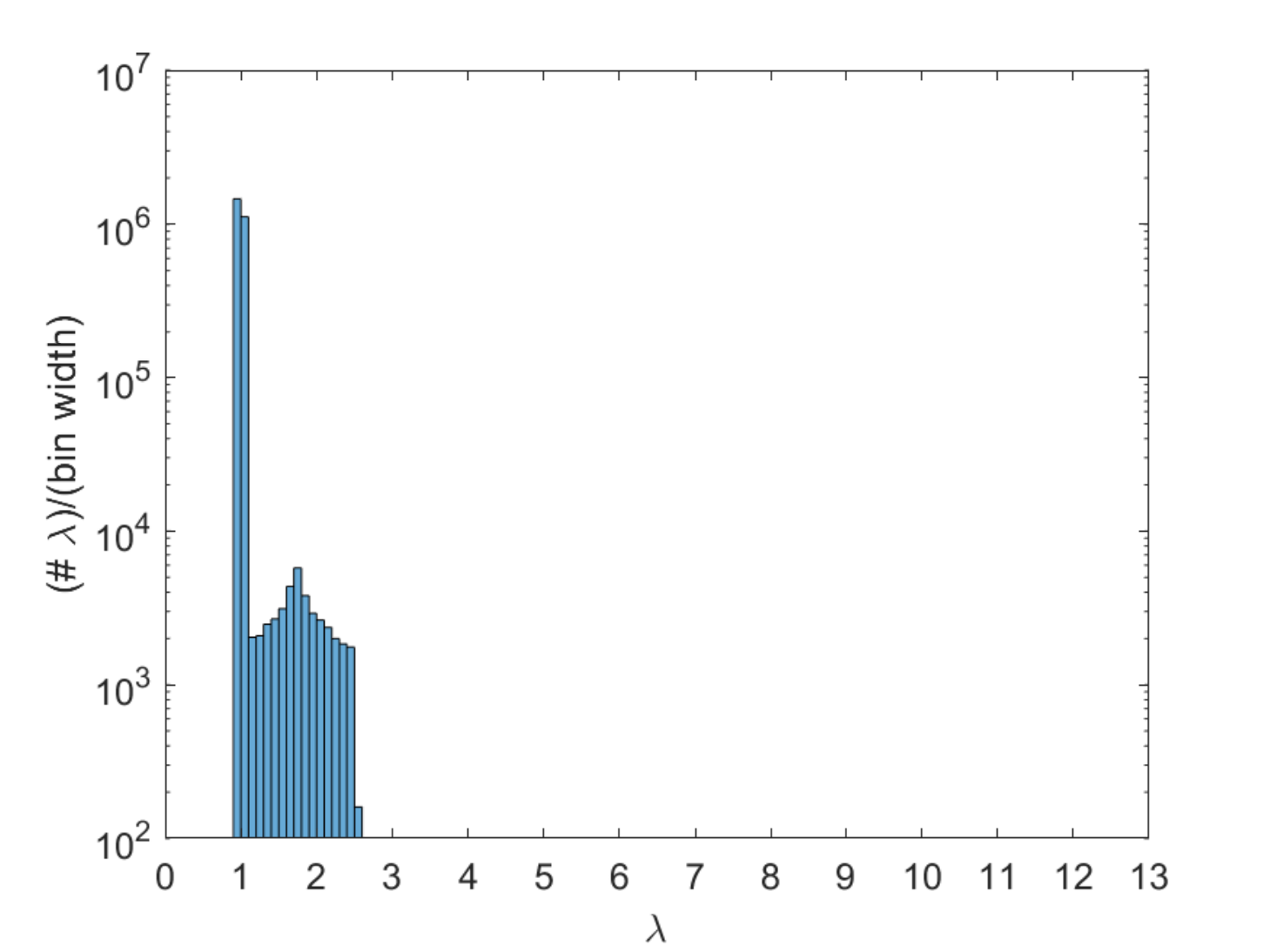}
\caption{Histograms showing density of eigenvalues for two-level preconditioned operators with $p=8$. Top left: $\widetilde{G}_{1,0}$, Top right: $\widetilde{G}_{2,0}$, Bottom left: $\widetilde{G}_{1,0}^{f}$, Bottom right: $\widetilde{G}_{2,0}^{f}$.} \label{Density-plot}
\end{figure}


\subsection{Condition Numbers of Three-level Variants}
For the three-level preconditioned operators, we need to find all the
eigenvalues of a $p^4\times p^4$ matrix for each sampled value of
$\boldsymbol{\theta}$. For the two-level variants, we saw that
sampling with $n=4$ is sufficient to give  useful accuracy of the LFA
predictions. Here, we also see similar behavior in Table
\ref{Cond-3-level}, which  shows the condition numbers (ratio of
extreme eigenvalues) of $\widetilde{G}_{i,j} (i,j=1,2)$ for varying $p$ and $n$. We see that, as expected from the theory, these condition numbers show degradation from the two-level case. It is not surprising that $\widetilde{G}_{2,2}$ has the smallest condition number of these variants, since $M_2$ is applied to both fine and coarse levels. Following Remark \ref{THM-Three-level}, we compute the constant, $\mathfrak{C}_{i,j}$, associated with the bound on $\kappa(\widetilde{G}_{i,j}) (i,j=1,2)$, see Table \ref{Cond-3-level} .  We see that the  constants needed to fit the theoretical bounds for $p=8$ are smaller than those for $p=4$. These suggest that those bounds may not be sharp. Similar behavior is seen  in \cite{zampini2017multilevel}, suggesting that the theoretical bounds may overestimate the true growth in the condition number.

{ As mentioned before, $G_{i,j}^c, G_{i,j}^{s,c}$ and
  $G_{i,j}^{f,c}$ may have complex eigenvalues. Here, we replace the
  condition number as a measure of the effectiveness of the
  preconditioner by the ratio of extreme eigenvalues, $\frac{{\rm
      max}|\lambda|}{{\rm min}|\lambda|}$}, noting that, except in
cases of large weights, the eigenvalues tend to remain clustered
around the positive real axis. Table \ref{Cond-Fine-Coarse-3-level}
presents these ratios for  variants $\widetilde{G}_{i,j}^{f}$ and $\widetilde{G}_{i,j}^{c}$,  based on the multiplicative combination with diagonal scaling on the fine level and coarse level, respectively, and some improvement is offered. For fixed $p$, the optimal $\omega$ is found to be robust to $n$ (not shown here). In general, we see better performance for $\widetilde{G}_{i,j}^{f}$ in comparison to $\widetilde{G}_{i,j}^{c}$, and $\widetilde{G}_{1,1}^{f}$ offers significant improvement over $\widetilde{G}_{1,1}$. For other values of $i,j$, however, only small improvements are seen.
\begin{table}
\centering
\caption{Ratios of extreme eigenvalues for three-level preconditioners with no multiplicative relaxation.}
\begin{tabular}{l||c|c|c|c}
\hline
\hline
$p$ &$\widetilde{G}_{1,1}$  &$\widetilde{G}_{1,2}$  &$\widetilde{G}_{2,1}$ &$\widetilde{G}_{2,2}$ \\
\hline
\hline
$4(n=2)$                  &9.18              &5.43       &7.27          &4.24   \\
$4(n=4)$                  &9.65               &5.68       &7.63         &4.47   \\
$4(n=8)$                  &9.79               &5.74       &7.73         &4.53    \\
$4(n=16)$                 &9.82              &5.76        &7.76         &4.54     \\
$4(n=32)$                 &9.83               &5.76       &7.77         &4.55     \\
\hline
$8(n=2)$                 &46.66               &15.46      &24.73        &7.55  \\
$8(n=4)$                 &50.00               &16.15      &26.53        &7.94  \\
$8(n=8)$                 &50.96               &16.33      &27.05        &8.04    \\
\hline
$\mathfrak{C}_{i,j}(p=4,n=32)$               & 0.11              & 0.11     & 0.14      &0.14    \\
\hline
$\mathfrak{C}_{i,j}(p=8,n=8)$               & 0.08              &  0.07     &0.12      &0.09    \\
\hline
\end{tabular}\label{Cond-3-level}
\end{table}

\begin{table}
\centering
\caption{Ratios of extreme eigenvalues for three-level preconditioners with fine-scale or coarse-scale multiplicative preconditioning. All results were computed with $n=4$, and the experimentally optimized weight, $\omega$, is shown in brackets.}
\begin{tabular}{l||c|c|c|c}
\hline
\hline
$p$  & $\widetilde{ G}_{1,1}^{f}$ &$\widetilde{G}_{1,2}^{f}$ &$\widetilde{G}_{2,1}^{f}$  &$\widetilde{G}_{2,2}^{f}$ \\
\hline
$4$                    &6.80(1.4)                  &4.28(1.4)              &6.14(1.6)       &4.04(1.1)     \\
\hline

$8$                    &28.75(1.7)                 &9.16(1.7)              &20.94(1.6)       &6.73(1.5)     \\
\hline
\multicolumn{5} {c} {\footnotesize{}} \\
\hline
\hline
$p$ & $\widetilde{ G}_{1,1}^{c}$  &$\widetilde{ G}_{1,2}^{c}$  &$\widetilde{G}_{2,1}^{c}$
 &$\widetilde{ G}_{2,2}^{c}$ \\
\hline

$4$                    &6.04(1.6)                  &5.47(1.1)              &4.67(1.6)        &4.30(1.0)     \\
\hline

$8$                    &31.91(2.0)                  &15.17(1.4)              &15.57(2.1)       &7.46(1.2)   \\
\hline
\hline
\end{tabular}\label{Cond-Fine-Coarse-3-level}
\end{table}


In order to see the sensitivity of performance to parameter choice, we
consider three-level preconditioners with weighted multiplicative
preconditioning on both fine and coarse scales,
$\widetilde{G}_{1,1}^{f,c}$ and $\widetilde{G}_{2,2}^{f,c}$, with
$p=4$ and $n=4$. At the left of Figure \ref{Gijfc-sensitivity}, we
present the LFA-predicted ratio of extreme eigenvalues for
$\widetilde{G}_{1,1}^{f,c}$ with variation in $\omega_1$ and
$\omega_2$. Here, we see strong sensitivity to ``small'' values of
$\omega_1$, for example $\omega_1 < 1.5$, and also to large values of
$\omega_1$ with small values of $\omega_2$.  We note general
improvement, though, in the optimal performance for large $\omega_1$
with suitably chosen $\omega_2$, albeit with diminishing returns as
$\omega_1$ continues to increase.  Fixing $\omega_1 = 4$, we find
$\omega_2 = 1.7$ offers best performance, with optimal eigenvalue
ratio of 2.66. At the right of Figure \ref{Gijfc-sensitivity},  we
consider $\widetilde{G}_{2,2}^{f,c}$ as a function of $\omega_1$ and
$\omega_2$.  Here, we see stronger sensitivity to large values of
$\omega_2$,  and to large values of  $\omega_1$ and small values of
$\omega_2$, but a large range of parameters that give generally
similar performance.  Fixing $\omega_1=4$, we find that
$\omega_2=1.2$ achieves the optimal eigenvalue ratio of 3.72. Similar
performance was seen for $\widetilde{G}_{1,2}^{f,c},\widetilde{
  G}_{2,1}^{f,c}$, and $\widetilde{G}_{i,j}^{s,c}$. Slight
improvements can be seen by allowing even larger values of $\omega_1$,
giving an LFA-predicted ratio of extreme eigenvalues for
$\widetilde{G}_{1,1}^{f,c}$ of 2.25 with $\omega_1 = 5.0$ and
$\omega_2 = 2.0$, but a much smaller band of values of $\omega_2$
leads to near-optimal performance as $\omega_1$ increases.  For
$\widetilde{G}_{2,2}^{f,c}$, this sensitivity does not arise, but the
improvements are even more marginal, achieving an LFA-predicted ratio
of extreme eigenvalues of 3.63 for $\omega_1 = 5.8$ and $\omega_2 = 1.3$.

Motivated by Figure \ref{Gijfc-sensitivity}, we fix $\omega_1=4$ with
$n=4$, and optimize the ratio of extreme eigenvalues for the three-level preconditioners with two multiplicative preconditioning steps per iteration, either both on the coarse level, $\widetilde{G}_{i,j}^{s,c}$, or one on each level, $\widetilde{G}_{i,j}^{f,c}$, with respect to $\omega_2$. From Table \ref{Cond-3-level-sym}, notable improvement is seen for all $i,j$ with $\widetilde{G}_{i,j}^{f,c}$, particularly for $\widetilde{G}_{1,1}^{f,c}$ and $\widetilde{G}_{2,1}^{f,c}$.  We also note that there is little variation in the optimal parameter for each preconditioner between the $p=4$ and $p=8$ cases. It is notable that we are able to achieve similar performance for the multiplicative preconditioner based on $M_1$ as seen for $M_2$, and that both show significant improvement from the classical three-level results shown in Table \ref{Cond-3-level}, when used in combination with multiplicative preconditioning on both fine and coarse levels.

 \begin{figure}
\centering
\includegraphics[width=6.25cm,height=5.5cm]{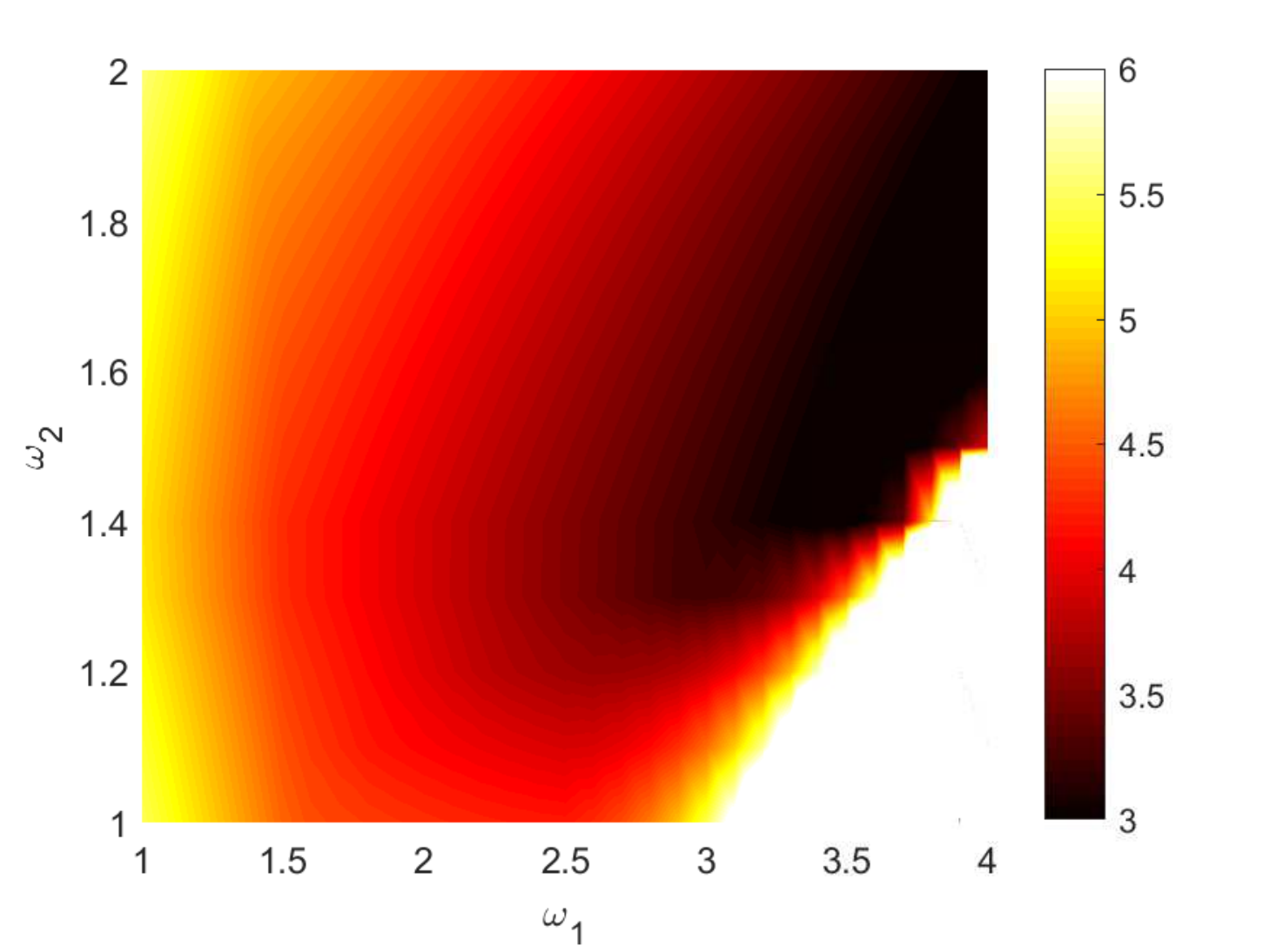}
\includegraphics[width=6.25cm,height=5.5cm]{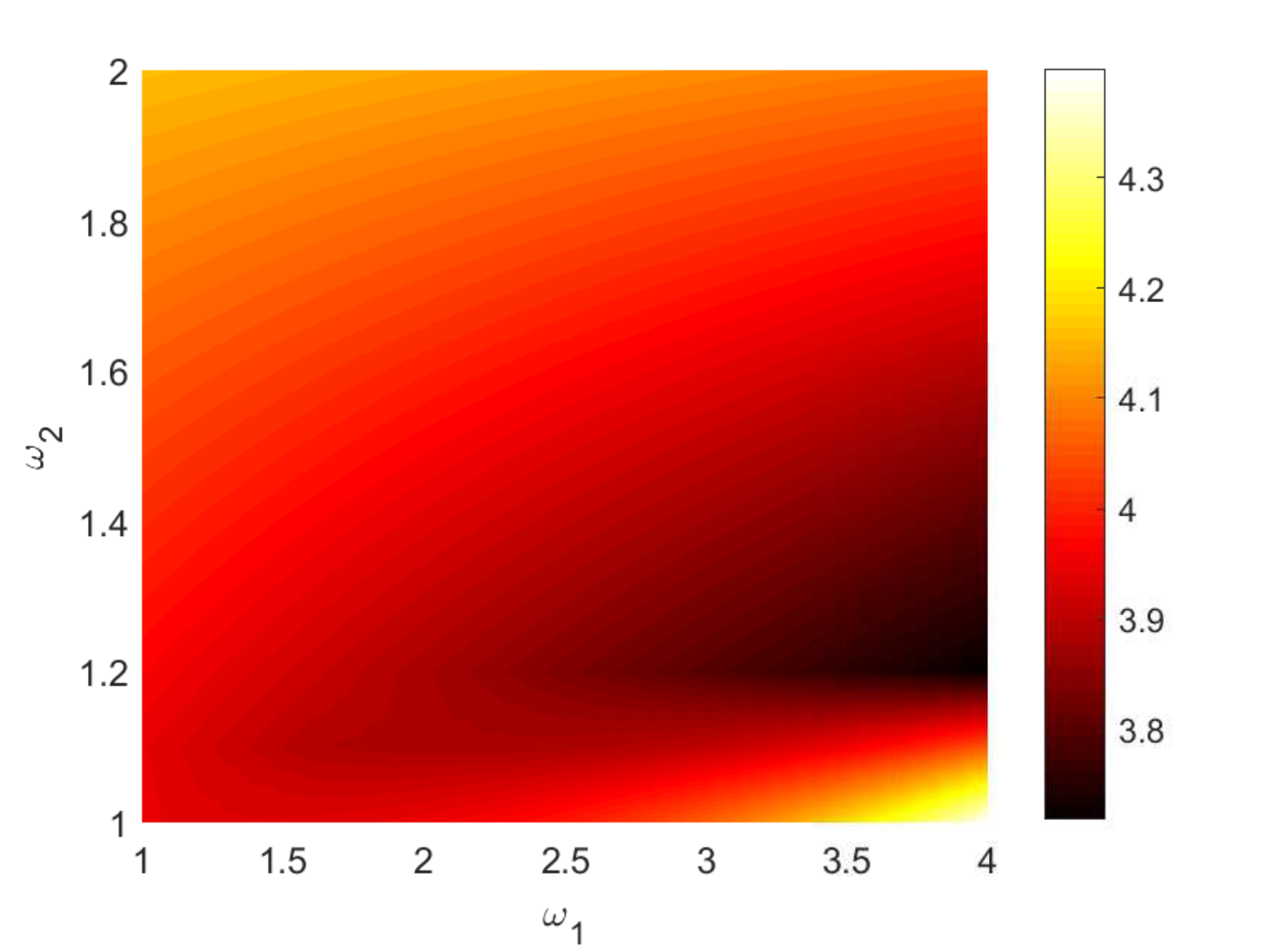}
\caption{Ratio of extreme eigenvalues for three-level preconditioners with multiplicative preconditioning on both the fine and coarse scales as a function of $\omega_1$ and $\omega_2$, with $p=4$ and $n=4$.  At left, ratio for $\widetilde{G}_{1,1}^{f,c}$; at right, ratio for $\widetilde{G}_{2,2}^{f,c}$.} \label{Gijfc-sensitivity}
\end{figure}

\begin{table}
\centering
\caption{Ratio of extreme eigenvalues for three-level preconditioners with symmetric weighting of multiplicative preconditioning on the coarse scale, $\widetilde{G}_{i,j}^{s,c}$, and weighting of multiplicative preconditioning on both fine and coarse scales, $\widetilde{G}_{i,j}^{f,c}$. All results were computed  with $n=4$, and the experimentally optimized weight, $\omega_2$, is shown in brackets.}
\begin{tabular}{l||c|c|c|c}
\hline
\hline
$p$ & $\widetilde{G}_{1,1}^{s,c}$  &$\widetilde{G}_{1,2}^{s,c}$  &$\widetilde{G}_{2,1}^{s,c}$
&$\widetilde{G}_{2,2}^{s,c}$ \\
\hline
$4$      &5.43(1.4)              &5.34(0.9)           &4.22(1.3)      &4.18(0.9)    \\
\hline
$8$      &17.45(1.2)             &14.13(1.0)         &8.31(1.1)      &6.88(0.9)   \\
\hline
\multicolumn{5} {c} {} \\
\hline
\hline
$p$   & $\widetilde{G}_{1,1}^{f,c}$      &$\widetilde{G}_{1,2}^{f,c}$      &$\widetilde{G}_{2,1}^{f,c}$ &$\widetilde{G}_{2,2}^{f,c}$ \\
\hline
$4$     &2.66(1.7)              &3.85(1.3)             &3.24(1.8)       &3.72(1.2)     \\
\hline
$8$     &5.16(1.8)              &7.59(1.7)          &4.88(1.8)    &5.70(1.5)    \\
\hline
\hline
\end{tabular}\label{Cond-3-level-sym}
\end{table}

\section{Conclusions} \label{sec:concl}
In this paper, we quantitatively estimate the condition numbers of variants of BDDC algorithms, using local Fourier analysis. A modified choice of basis is used to simplify the LFA, and we believe this choice will prove useful in analysing many domain decomposition algorithms in the style used here. Multiplicative preconditioners with these two domain decomposition methods are discussed briefly, and both lumped and Dirichlet variants can be improved in this way.  The coarse problem involved in these domain decomposition methods can be solved by similar methods. LFA analysis of three-level variants is also considered. Degradation in convergence is well known when moving from two-level to three-level variants of these algorithms. We  show that the LFA presented above, in combination with the use of multiplicative preconditioners on the coarse and fine levels provide ways to mitigate this performance loss. Future work includes extending these variants of the preconditioned operators, using LFA to optimize the resulting algorithms, and considering other types of problems with similar preconditioners.



\bibliographystyle{siamplain}
\bibliography{BDDC}
\end{document}